\newtheorem{theorem}{\em Theorem}
\newtheorem{proposition}[theorem]{\em Proposition}
\newtheorem{conjecture}{\em Conjecture}
\newtheorem{definition}{\em Definition}
\newtheorem{lemma}[theorem]{\em Lemma}
\newtheorem{corollary}[theorem]{\em Corollary}
\newtheorem{observation}[theorem]{\em Observation}
\journal{Journal of Computational and Applied Mathematics}
\begin{document}

\begin{frontmatter}

\title{Adjacent Vertex Distinguishing Total Coloring of Corona Product of Graphs\footnote{Conflict of Interest:  Author A declares that she has no conflict of interest. Author B has received research grants from UNAM - grant  PAPIIT-UNAM-IN117219}}

\author[label1]{Hanna Furma\'nczyk\corref{cor1}}
\address[label1]{Institute of Informatics, Faculty of Mathematics, Physics and Informatics,\\University of Gda\'nsk, Wita Stwosza 57, 80-309 Gda\'nsk, Poland}
 
\ead{hanna.furmanczyk@ug.edu.pl}
\cortext[cor1]{Corresponding author}
\author[label5]{Rita Zuazua}
\address[label5]{Department of Mathematics, Faculty of Sciences, 
National Autonomous University of Mexico, 
Ciudad Universitaria, Coyoacan, 04510 Mexico, DF, Mexico}

\ead{ritazuazua@ciencias.unam.mx}

\begin{abstract}
An adjacent vertex distinguishing total $k$-coloring $f$ of a graph $G$ is a proper total $k$-coloring of $G$ such that no pair of adjacent vertices has the same color sets, where the color set at a vertex $v$, $C^G_f(v)$, is $\{f(v)\} \cup \{f(vu)|u \in V (G), vu \in E(G)\}$. In 2005 Zhang et al. posted the conjecture (AVDTCC) that every simple graph $G$ has adjacent vertex distinguishing total $(\Delta(G)+3)$-coloring. In this paper we confirm the conjecture for many coronas, in particular for generalized, simple and $l$-coronas of graphs, not relating the results to particular graph classes.
\end{abstract}

\begin{keyword}
corona graph \sep $l$-corona \sep generalized corona graph \sep adjacent vertex distinguishing total coloring \sep AVDTC Conjecture
\MSC 05C15 \sep 05C76 \sep 68R10 
\end{keyword}
\end{frontmatter}
\newpage
\section{Introduction}
The processes occurring in the world around us can very often be modeled by the language of graph theory. The graph coloring problems, vertex, edge as well as total version, are ones of the best known problems of graph theory.
Proper total coloring was considered for the first time by Rosenfeld in 1970 \cite{rosen}. In the first decade of this century a new concept appeared in the topic of graph colorings. Many researchers considered colorings (proper, total or from lists) such that vertices (all or adjacent) are distinguished either by sets or multisets or sums. In this paper we investigate the problem of proper total distinguishing adjacent vertices by sets.

Let $G = (V, E)$ be a simple graph with maximum degree $\Delta(G)$. Let $[k]$ denote the set $\{1,\ldots,k\}$ for any positive integer $k$. Suppose that $f : V \cup E \rightarrow
[k]$ is a proper total coloring of $G$, i.e. no two adjacent edges, no two adjacent vertices, and no edge and its endvertices are assigned the same color. The smallest number $k$ admitting such a proper total $k$-coloring is named \emph{total chromatic number} and is denoted by $\chi''(G)$. Clearly, $\chi''(G)\geq \Delta(G)+1$. Vizing \cite{Vizing}, and independly Behzad et al. \cite{behzad}, posted the following conjecture.

\begin{conjecture}[TCC, \cite{behzad,Vizing}]
For any graph $G$, $\chi''(G) \leq \Delta(G)+2.$\label{tcc:conj}
\end{conjecture}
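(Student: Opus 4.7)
The plan is to attempt an induction on $|V(G)|+|E(G)|$, assuming a minimum counterexample $G$ of maximum degree $\Delta$ and looking for a structural reduction that can be inverted. A natural first reduction is to pick an edge $uv$ with $\deg(u)+\deg(v)$ small, delete it, apply induction to $G-uv$ to obtain a proper total $(\Delta+2)$-coloring $f'$, and then reinstate $uv$ while recoloring only a bounded region. The concrete tool I would deploy at the outset is Vizing's theorem, which supplies a proper edge coloring with $\Delta+1$ colors; the problem then reduces to whether one further color suffices to insert vertex labels so that every $v$ receives a color distinct from all $\deg(v)$ incident edge colors and from the labels of its neighbors.

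A naive count at $v$ leaves only $(\Delta+2)-2\deg(v)$ free colors after forbidding incident edge colors and neighbor labels, which is useless already for $\deg(v)\ge 2$. The honest plan therefore alternates between a partial edge coloring, a list-type vertex coloring using the free palettes, and Kempe-chain swaps that repair conflicts along a bichromatic path and push any residual defect to an easily-handled boundary. For small $\Delta$ the case analysis is tractable (the conjecture is known through $\Delta\le 5$); for large $\Delta$ the right vehicle is the probabilistic framework of Molloy and Reed, which combines the Lov\'asz Local Lemma with iterative palette reductions to obtain concentration around the assignment we want.

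The main obstacle is precisely the reason Conjecture~\ref{tcc:conj} has remained open since the 1960s: vertex colors and incident edge colors draw from a single palette, so local swaps that would work in pure edge coloring create fresh vertex/edge conflicts that propagate uncontrollably. The best general bound to date is $\chi''(G)\le \Delta(G)+C$ of Molloy and Reed for a large absolute constant $C$, far from the conjectured $C=2$. A realistic plan must therefore either restrict to a graph class whose structure constrains the neighborhood (bipartite, $K_4$-minor-free, or planar with $\Delta$ large enough, where the conjecture is already verified), or introduce a genuinely new structural insight coupling incident-edge palettes to neighbor labels; without such an insight, the induction step simply cannot be closed.
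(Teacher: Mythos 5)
The statement you were asked about is not a theorem of this paper: it is the Total Coloring Conjecture of Behzad and Vizing, which the paper merely records as Conjecture~\ref{tcc:conj} and later invokes as a \emph{hypothesis} (e.g., ``if Conjecture~\ref{tcc:conj} holds for $H$, then \dots''). The paper contains no proof of it, and none is known; it has been open since the 1960s. So there is no ``paper's own proof'' to compare against.

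Your proposal, read as a proof, has a genuine gap --- namely, all of it. You sketch an induction on $|V(G)|+|E(G)|$ but never close the inductive step: as you yourself compute, after Vizing's theorem hands you a $(\Delta+1)$-edge-coloring, the greedy count at a vertex $v$ leaves $(\Delta+2)-2\deg(v)$ available colors, which is negative for most vertices, and the Kempe-chain repair you gesture at is exactly the step nobody has been able to control (recoloring an edge to free a vertex color creates new edge conflicts that propagate). The appeal to Molloy--Reed yields only $\Delta+C$ for a large constant $C$, not $\Delta+2$. To your credit, the proposal is honest about this: you correctly identify that the induction cannot be closed without a new structural idea, and you correctly note that the known cases are $\Delta\le 5$ and certain restricted classes. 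But an accurate description of why a proof does not exist is not a proof. The appropriate conclusion is that this item should be treated as what the paper labels it --- a conjecture used as an assumption --- rather than as a statement admitting a proof attempt.
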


For a given proper total $k$-coloring of $G$ and for a vertex $v\in V(G)$, let $C_f(v)$ denote the \emph{color set} of $v$ with respect to $f$, i.e. the set $\{f(v)\} \cup \{f(vu)|u \in V (G), vu \in E(G)\}$. Sometimes we will consider the color set restricted to some subgraph of $G$. Let $H$ be a subgraph of $G$ and let $v\in V(H)$. Then, $C^H_f(v)$ denotes the set $\{f(v)\} \cup \{f(vu)|u \in V (H), vu \in E(H)\}$. If the total coloring is clear, we can use the notation of $C(v)$ and $C^H(v)$, respectively.

In this paper, we are interested in the smallest number $k$ of colors such that there is a proper total $k$-coloring of $G$ with the adjacent vertices being distinguished by their color sets. Such a model was introduced by Zhang et al. \cite{zhang} in 2005. More formally, in \emph{adjacent vertex distinguishing total
$k$-coloring} $f$ (avd total $k$-coloring, for short), we have $C_f(u) \neq C_f (v)$ for every pair of vertices $u,v$ such that $uv \in E(G)$. The smallest $k$ admitting such coloring  is called the \emph{adjacent vertex distinguishing total chromatic number} (avd total chromatic number, for short) and is denoted by $\chi_{a}''(G)$. Of course, $\chi_{a}''(G)\geq \chi''(G)$. It turns out that there are a lot of examples of graphs for which this inequality is strict, e.g. $2l+1=\chi''(K_{2l+1})<\chi_a''(K_{2l+1})=2l+3$ \cite{short}.

As a direct consequence of the definition, we have the following relation between avd total chromatic number and chromatic number, $\chi(G)$, and chromatic index of a graph $G$, $\chi'(G)$.
\begin{proposition}
For any graph $G$, $\chi''_a(G)\leq \chi(G) + \chi'(G)$.\label{prop:obs}\hfill$\Box$
\end{proposition}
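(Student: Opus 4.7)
The plan is to construct an avd total coloring of $G$ by taking the disjoint union of an optimal proper vertex coloring and an optimal proper edge coloring, assigning them from disjoint palettes.

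First, fix a proper vertex coloring $c_1 : V(G) \to \{1,\ldots,\chi(G)\}$ and a proper edge coloring $c_2 : E(G) \to \{\chi(G)+1,\ldots,\chi(G)+\chi'(G)\}$, where the two palettes are chosen to be disjoint. Define $f : V(G) \cup E(G) \to \{1,\ldots,\chi(G)+\chi'(G)\}$ by $f(v) = c_1(v)$ for $v \in V(G)$ and $f(e) = c_2(e)$ for $e \in E(G)$.

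Next, verify that $f$ is a proper total coloring. Adjacent vertices receive distinct colors because $c_1$ is proper; adjacent edges receive distinct colors because $c_2$ is proper; and no edge shares a color with either of its endpoints because the vertex palette and edge palette are disjoint.

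Finally, for the adjacent-vertex-distinguishing property, observe that for any vertex $v$, the set $C_f(v)$ contains exactly one element from the vertex palette $\{1,\ldots,\chi(G)\}$, namely $f(v)=c_1(v)$, while all other elements of $C_f(v)$ come from incident edges and therefore lie in the disjoint edge palette. Hence for any edge $uv \in E(G)$, the restrictions of $C_f(u)$ and $C_f(v)$ to the vertex palette are $\{c_1(u)\}$ and $\{c_1(v)\}$ respectively, and these differ since $c_1$ is a proper vertex coloring. Thus $C_f(u) \neq C_f(v)$, which gives $\chi''_a(G) \leq \chi(G) + \chi'(G)$. There is no real obstacle here beyond ensuring the palettes are taken disjoint; the whole argument is a direct consequence of the definition.
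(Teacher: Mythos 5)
Your proof is correct and is precisely the argument the paper has in mind: the authors state the proposition as ``a direct consequence of the definition'' and omit the proof, and your construction---superimposing an optimal vertex coloring and an optimal edge coloring on disjoint palettes, then noting that each color set meets the vertex palette in exactly the singleton $\{c_1(v)\}$---is the standard way to make that consequence explicit. No gaps.
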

Taking into account for example Vizing and Brook's theorems we get 
\begin{proposition}
Let $G\neq K_n$ and $G\neq C_{2k+1}$. Then $\chi''_a(G)\leq 2\Delta(G)+1.$\hfill$\Box$
\end{proposition}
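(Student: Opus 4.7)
The plan is simply to combine Proposition~\ref{prop:obs} with the classical theorems of Brooks and Vizing; indeed the exclusions $G\neq K_n$ and $G\neq C_{2k+1}$ in the hypothesis are exactly the excluded cases in Brooks' theorem, so this is the strategy already hinted at by the sentence preceding the statement.

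First I would invoke Proposition~\ref{prop:obs} to reduce the task to bounding $\chi(G)+\chi'(G)$ by $2\Delta(G)+1$. Then Vizing's theorem gives $\chi'(G)\leq\Delta(G)+1$ for every simple graph, with no hypothesis needed. Finally, Brooks' theorem asserts that every connected graph that is neither a complete graph nor an odd cycle satisfies $\chi(G)\leq\Delta(G)$, so the assumption on $G$ yields directly the required bound on the chromatic number. Summing the two inequalities,
\[
\chi''_a(G)\;\leq\;\chi(G)+\chi'(G)\;\leq\;\Delta(G)+(\Delta(G)+1)\;=\;2\Delta(G)+1.
\]

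The argument is a one-line combination of three classical results, so no serious obstacle arises. The only minor subtlety is that Brooks' theorem is classically stated for connected graphs: if $G$ is disconnected, one either applies the theorem componentwise (noting that an avd total coloring of a disjoint union is obtained independently on each component, since there are no cross-edges) or uses the fact that $\chi(G)$ equals the maximum of the chromatic numbers of its components; in either case the conclusion follows as long as no component of full degree is itself a clique or odd cycle, which is the content of the hypothesis.
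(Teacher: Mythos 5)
Your proposal is correct and is exactly the argument the paper intends: the sentence preceding the statement ("Taking into account for example Vizing and Brook's theorems we get") is the paper's entire proof, namely combining Proposition~\ref{prop:obs} with Brooks' bound $\chi(G)\leq\Delta(G)$ and Vizing's bound $\chi'(G)\leq\Delta(G)+1$. Your extra remark about disconnected graphs is a harmless refinement the paper sidesteps by assuming connectivity throughout.
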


Huang et al. \cite{huang} proved that the bound from the last proposition can be improved to $2\Delta(G)$.
Whereas, for planar graphs Proposition \ref{prop:obs} implies
\begin{proposition}
    For any planar graph $G$ we have $\chi''_a(G)\leq \Delta(G)+5.$\hfill$\Box$

\end{proposition}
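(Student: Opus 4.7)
The plan is to derive the bound as a direct corollary of Proposition~\ref{prop:obs}, by separately bounding the chromatic number and the chromatic index of a planar graph by their respective classical maxima.

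First, I would invoke Proposition~\ref{prop:obs} to obtain $\chi''_a(G) \leq \chi(G) + \chi'(G)$. Then I would bound the two summands individually. For $\chi(G)$, I would appeal to the Four Color Theorem, which guarantees $\chi(G) \leq 4$ for every planar graph $G$. For $\chi'(G)$, I would invoke Vizing's theorem, which gives $\chi'(G) \leq \Delta(G) + 1$ for every simple graph. Adding these two inequalities yields
\begin{equation*}
\chi''_a(G) \leq \chi(G) + \chi'(G) \leq 4 + (\Delta(G) + 1) = \Delta(G) + 5,
\end{equation*}
as required.

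Since the proof is essentially a one-line concatenation of two well-known theorems with Proposition~\ref{prop:obs}, there is no real obstacle; the only care needed is that one is allowed to use a proper vertex coloring and a proper edge coloring with \emph{disjoint} palettes to form the proper total coloring underlying Proposition~\ref{prop:obs}, which is precisely how that proposition is set up. Consequently the statement follows immediately with no additional case analysis.
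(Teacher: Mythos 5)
Your proof is correct and is exactly the argument the paper intends: the proposition is stated as an immediate consequence of Proposition~\ref{prop:obs} combined with the Four Color Theorem and Vizing's theorem, which is precisely your derivation. Nothing further is needed.
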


Zhang et al. in \cite{zhang} determined $\chi_{a}''(G)$ for many basic families of graphs, including cycles, complete graphs, fans, wheels or trees. Additionally, the following bound on $\chi_{a}''(G)$ in terms of the maximum degree of a graph $\Delta(G)$ was conjectured.

\begin{conjecture}[AVDTCC, \cite{zhang}]
For any simple graph $G$, $\chi_{a}''(G)\leq \Delta(G)+3$.\label{conj:zhang}
\end{conjecture}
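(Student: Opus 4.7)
The statement is the AVDTC Conjecture, which is widely believed but remains open in full generality; any realistic proposal must therefore either isolate a tractable regime or propose an asymptotic approximation. My plan is a two-stage attack. In the first stage, produce a proper total coloring $f_0$ of $G$ using $\Delta(G)+3$ colors. Unconditionally one has $\chi''(G)\le 2\Delta(G)+1$ by a greedy argument, and in fact $\chi''(G)\le \Delta(G)+C$ for an absolute constant $C$ is known via probabilistic methods, so three spare colors are available at least for graphs of sufficiently large maximum degree. In the second stage, repair those edges $uv$ for which $C_{f_0}(u)=C_{f_0}(v)$ by local recolorings that preserve properness of the total coloring.

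For the repair step the natural engine is the Lov\'asz Local Lemma. For each vertex $v$, randomly perturb the colors on $v$ and its incident edges among the legal proper completions within the palette $[\Delta(G)+3]$. Let $B_{uv}$ be the bad event that, after perturbation, $C_f(u)=C_f(v)$. Equality of color sets forces $\deg(u)=\deg(v)=d$ and pins a multiset of $d+1$ colors on the closed stars of $u$ and $v$, so the three free colors provide enough slack to estimate $\Pr[B_{uv}]\le c/\Delta$ for an absolute constant $c$. Since $B_{uv}$ is determined by recolorings within distance two of the edge $uv$, it depends on at most $O(\Delta^2)$ other bad events, and a symmetric Local Lemma calculation closes.

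The hard part, and the point where every current attack seems to founder, is the low-degree and near-regular regime, where the probabilistic slack vanishes. Here one has to switch to a structural argument: identify a finite list of reducible configurations, prove by induction on $|V(G)|$ that a minimum counterexample contains none, and then use a discharging scheme on the degrees to derive a contradiction. Compiling a sufficient catalogue of reducible configurations is precisely the unresolved core of the conjecture, and I do not propose to settle it here. Accordingly, the paper restricts attention to corona-type constructions, where the explicit join structure makes both stages constructive and dispenses with the probabilistic and discharging machinery entirely; this is where the \emph{actual} contribution of the present work lies.
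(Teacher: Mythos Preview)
The statement is a conjecture, and the paper does not prove it; it merely records it and then confirms it for various corona products under structural hypotheses on the factors. You correctly recognize this, and your closing sentence accurately locates where the paper's actual contribution lies. So there is no ``paper's proof'' to compare against here.

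That said, the Local Lemma sketch you offer does not close even in the large-degree regime you isolate. You estimate $\Pr[B_{uv}]\le c/\Delta$ and bound the dependency degree by $O(\Delta^{2})$; the symmetric LLL then asks for $e\cdot p\cdot(d+1)\le 1$, and with your numbers this is $e\cdot(c/\Delta)\cdot O(\Delta^{2})=O(\Delta)$, which diverges rather than closes. To make the LLL fire you would need $\Pr[B_{uv}]=O(1/\Delta^{2})$, and three spare colors over a palette of size $\Delta+3$ do not obviously buy that much decay; nor is it clear that your vaguely specified ``random perturbation among legal proper completions'' even yields the $c/\Delta$ bound you claim. This quantitative shortfall is essentially why the Coker--Johannson probabilistic argument cited in the paper delivers only $\chi''_a(G)\le\Delta(G)+C$ for some unspecified absolute constant $C$, not $C=3$. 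Your instinct that the low-degree, near-regular case is the structural bottleneck is sound, but the high-degree probabilistic stage as you have written it is already off by a factor of $\Delta$.
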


The conjecture has been attracting the attention of many graph theorists since 2005.  It has been proved for some families of graphs, including planar \cite{planar8, planar10, planar9, planfrom10}, outerplanar \cite{outer}, subcubic \cite{delta3, short, delta3wang}, bipartite \cite{delta3}, and 4-regular graphs \cite{4reg}.
The last result is proved by giving a relevant algorithm for avd total 7-coloring of 4-regular graphs. Coker and Johannson \cite{coker} used probabilistic methods to prove that there exists a constant $c$ such that $\chi''_a(G)\leq \Delta(G)+c$. 

Zhang et al. \cite{zhang} proved also 
\begin{lemma}[\cite{zhang}]
If $G$ has two vertices of maximum degree which are adjacent, then $\chi_{a}''(G) \geq \Delta(G) + 2$.
\end{lemma}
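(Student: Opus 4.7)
The plan is to argue by contradiction: suppose $G$ admits an avd total coloring $f$ using only $\Delta(G)+1$ colors, and derive that the two adjacent vertices $u,v$ of maximum degree must share the same color set, violating the adjacency-distinguishing requirement.

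First I would note the following color-counting observation. Let $w$ be any vertex with $\deg(w)=\Delta(G)$, and assume $f$ uses palette $[\Delta(G)+1]$. Since $f$ is a proper total coloring, the $\Delta(G)$ edges incident with $w$ must receive $\Delta(G)$ pairwise distinct colors, and the color $f(w)$ must differ from each of them. With only $\Delta(G)+1$ colors available, this forces $f(w)$ to be the unique remaining color, so
\begin{equation*}
C_f(w)=\{f(w)\}\cup\{f(wx):wx\in E(G)\}=[\Delta(G)+1].
\end{equation*}

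Now I would apply this to the hypothesized adjacent pair $u,v$ of maximum-degree vertices: by the observation, $C_f(u)=[\Delta(G)+1]=C_f(v)$. But $uv\in E(G)$ and $f$ is assumed to be an avd total coloring, so $C_f(u)\neq C_f(v)$, a contradiction. Hence no avd total coloring with $\Delta(G)+1$ colors exists, and $\chi_a''(G)\geq \Delta(G)+2$.

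There is no real obstacle here; the only subtlety is making explicit that a vertex of degree $\Delta(G)$ leaves exactly one free color for itself when the palette has size $\Delta(G)+1$, which is what forces $C_f(w)$ to be the full palette. The rest is immediate from the definitions of proper total coloring and avd total coloring.
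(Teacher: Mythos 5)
Your argument is correct: with only $\Delta(G)+1$ colors, any vertex of maximum degree is forced to have the full palette as its color set, so two adjacent such vertices cannot be distinguished. The paper itself states this lemma without proof (citing Zhang et al.), and your reasoning is exactly the standard argument behind that cited result, so there is nothing further to compare.
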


\begin{lemma}[\cite{zhang}]
If $G$ has $m$ components $G_i$, $i\in[m]$, and
$|V(G_i)| \geq 2$, $i\in[m]$, then $\chi_{a}''(G) = \max\{\chi_{a}''(G_1), \chi_{a}''(G_2), \ldots, \chi_{a}''(G_m)\}$.
\end{lemma}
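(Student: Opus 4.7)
The plan is to prove the equality by establishing the two inequalities separately, with the easy direction being the lower bound (restriction) and the direction requiring a small construction being the upper bound (combination).

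First I would handle the lower bound $\chi_a''(G) \geq \max_i \chi_a''(G_i)$. Given any avd total $k$-coloring $f$ of $G$, I would argue that for each component $G_i$, the restriction $f|_{V(G_i)\cup E(G_i)}$ is itself an avd total $k$-coloring of $G_i$. Since no edge joins vertices in distinct components, the proper total coloring property and the distinction of color sets at adjacent vertices transfer verbatim: for $u,v\in V(G_i)$ with $uv\in E(G_i)$, the sets $C_f(u)$ and $C_f(v)$ agree with $C_f^{G_i}(u)$ and $C_f^{G_i}(v)$, so $C_f^{G_i}(u)\neq C_f^{G_i}(v)$. This yields $\chi_a''(G_i)\leq k$ for every $i$, hence the desired inequality.

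For the upper bound, let $k:=\max_i \chi_a''(G_i)$ and choose, for each $i\in[m]$, an avd total coloring $f_i$ of $G_i$ that uses colors from $[k]$ (we may freely enlarge the palette of $f_i$ from $[\chi_a''(G_i)]$ to $[k]$ without losing any property). Define $f$ on $G$ by setting $f(x)=f_i(x)$ whenever $x\in V(G_i)\cup E(G_i)$. Since the vertex sets and edge sets of the components partition $V(G)\cup E(G)$, the map $f$ is well-defined. Properness is trivially preserved: any conflict between two elements of $V\cup E$ colored by $f$ would involve two incident elements, which necessarily lie in the same component. Similarly, any adjacent pair $u,v$ of $G$ lies in a common component $G_i$, so $C_f(u)=C_{f_i}^{G_i}(u)\neq C_{f_i}^{G_i}(v)=C_f(v)$.

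There is really no hard step here; the only point to be careful about is verifying that $C_f(v)=C_f^{G_i}(v)$ for $v\in V(G_i)$, which is where the hypothesis that the $G_i$ are the connected components (and thus no cross-edges exist) is used. The assumption $|V(G_i)|\geq 2$ is not essential for the argument but ensures that each $\chi_a''(G_i)$ refers to a graph with at least one edge, so the maximum on the right-hand side is meaningful; isolated vertices could be absorbed into any component's coloring trivially.
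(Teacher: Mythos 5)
Your argument is correct and complete: the lower bound by restriction and the upper bound by gluing component colorings (after padding each palette to $[k]$, which is legitimate since a $k$-coloring need not use all $k$ colors) are exactly the right two halves, and the key observation that $C_f(v)=C_f^{G_i}(v)$ for $v\in V(G_i)$ is correctly identified as the place where the absence of cross-component edges is used. The paper itself states this lemma as a cited result from Zhang et al.\ and gives no proof, so there is nothing to compare against; your proof is the standard one and I see no gap.
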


That is why we assume that all graphs considered in this paper are connected.

In this paper we put our attention to graph products. They are interesting and useful
in many situations. The complexity of many problems that deal with very large and complicated graphs is reduced greatly if one is able to fully characterize 
the properties of less complicated prime factors.
In the literature we have some results concerning adjacent vertex distinguishing total coloring for join graphs of paths with cycles and fans \cite{join_paths_cycles, join_path_fan}, and some Cartesian products of simple graphs \cite{cartpath, cart, cart_myc, relations}. We consider the objective problem for corona product of graphs - generalized, simple and $l$-coronas. They are often close to the boundary between easy and hard coloring problems \cite{harder}. 
\begin{definition} \emph{For a given simple graph $G$ with $V(G)=\{v_1,\ldots,v_{n_G}\}$, and graphs $H_1,\ldots,$ $H_{n_G}$, the} generalized corona, \emph{denoted by $G\Tilde{\circ} \Lambda _{i=1}^{n_G} H_i$ or by $G \Tilde{\circ} (H_1,\ldots,H_{n_G})$, is the graph obtained by taking one copy of graphs $G$, $H_1,...,H_{n_G}$ and joining the $v_i$ vertex of $G$ to every vertex of $H_i$ (cf. Fig.~\ref{fig:ex_gen}).}
\end{definition}

\begin{figure}
    \centering
    \includegraphics[scale=0.5]{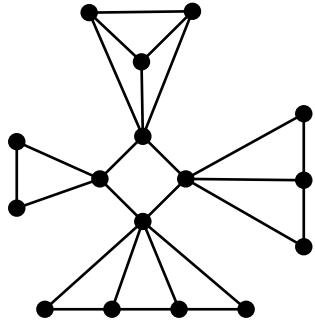}
    \caption{An example of a generalized corona - graph $C_4\Tilde{\circ} (C_3,P_3,P_4,P_2)$.}
    \label{fig:ex_gen}
\end{figure}

In the cases when all graphs $H_i$ are isomorphic, i.e. $H_1 \simeq H_2 \simeq \cdots \simeq H_{n_G}\simeq H$, the generalized corona is reduced to \emph{simple} corona $G\circ H$. Graph $G$ is called the \emph{center graph}, while graph $H$ is named the \emph{outer graph}. Such type of graph product was introduced by Frucht and Harary \cite{frucht}.

Let $v\in V(G)$, by $F_v$ we denote the set of edges of $G\circ H$ linking $v$ with the relevant copy of $H$, and we name it by \emph{fan} in $v$. 

\begin{definition}
\emph{For any integer $l \geq 2$, the graph $G \circ ^l H$ is defined as 
$G \circ ^l H = (G \circ ^{l-1} H ) \circ H$, where $G \circ ^1 H =G \circ H$. Graph $G \circ ^l H$ is also named as} $l$-corona product \emph{of $G$ and $H$. }
\end{definition}

In this paper we confirm Conjecture \ref{conj:zhang} for many coronas, generalized, simple, or $l$-coronas, of graphs that fulfill the conjecture, not relating the results to particular graph classes. We conclude the paper with some open questions.

\section{Main results}\label{main}
We start with some basic observations.
\begin{observation}
Let $G$ be a simple graph. If there is avd total $k$-coloring of $G$ then there is also its avd total $(k+1)$-coloring.
\end{observation}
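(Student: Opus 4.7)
The plan is to show that an avd total $k$-coloring $f$ of $G$ is itself, essentially verbatim, an avd total $(k+1)$-coloring of $G$, simply by enlarging the allowed palette from $[k]$ to $[k+1]$ without using the new color.

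More precisely, I would define $f':V(G)\cup E(G)\to[k+1]$ by $f'(x)=f(x)$ for every vertex and edge $x$. First I would check that $f'$ is a proper total coloring: since $f$ is proper and $f'$ agrees with $f$ on every vertex and edge, no two adjacent edges, no two adjacent vertices, and no edge together with its endvertex receive the same color under $f'$. Hence $f'$ is a proper total $(k+1)$-coloring (the color $k+1$ simply remains unused).

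Next I would verify the distinguishing property. For any vertex $v\in V(G)$, since $f'$ agrees with $f$ on $v$ and on every edge incident to $v$, we have $C_{f'}(v)=\{f'(v)\}\cup\{f'(vu):vu\in E(G)\}=\{f(v)\}\cup\{f(vu):vu\in E(G)\}=C_f(v)$. Therefore for each edge $uv\in E(G)$, the inequality $C_f(u)\neq C_f(v)$ immediately gives $C_{f'}(u)\neq C_{f'}(v)$, so $f'$ is avd.

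There is essentially no obstacle here: the only thing one has to notice is that the definition of $C_f$ depends only on which colors actually appear at $v$ and on edges incident to $v$, not on the size of the underlying palette, so extending the palette without activating the new color preserves both properness and the avd condition. By induction on $k$, the same argument yields an avd total $k'$-coloring for every $k'\geq k$.
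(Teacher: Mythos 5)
Your proof is correct and is exactly the intended argument: the paper states this as an Observation without proof precisely because one can reuse the given coloring unchanged, leaving color $k+1$ unused, so that every color set $C_f(v)$ is preserved and both properness and the distinguishing condition carry over. Nothing further is needed.
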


\begin{observation}
In any avd total $k$-coloring $f$ of a graph $G$, if the degree of a vertex $u$ is different of the degree of $v$, $u, v\in V(G)$, then $C_f(u) \neq C_f (v)$.
\end{observation}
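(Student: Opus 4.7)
The plan is to use a simple cardinality argument based on the fact that $f$ is a proper total coloring. For any vertex $w \in V(G)$, the set $C_f(w)$ consists of the color $f(w)$ together with the colors $f(wx)$ for each edge $wx$ incident to $w$.

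First I would observe that, because $f$ is proper, the edges incident to $w$ receive pairwise distinct colors (they are pairwise adjacent at $w$), and each edge color differs from $f(w)$ (since each such edge is incident to $w$). Hence the $\deg(w) + 1$ elements listed in the definition of $C_f(w)$ are pairwise distinct, giving $|C_f(w)| = \deg(w) + 1$.

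Applying this to $u$ and $v$, we get $|C_f(u)| = \deg(u) + 1 \ne \deg(v) + 1 = |C_f(v)|$, so in particular $C_f(u) \ne C_f(v)$. Note that this conclusion does not even require $u$ and $v$ to be adjacent, nor does it use the vertex-distinguishing hypothesis; only properness of the total coloring is needed.

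There is no real obstacle here — the statement is essentially an immediate corollary of the definition of proper total coloring. The only thing to be careful about is phrasing the cardinality claim so that it is clear the union defining $C_f(w)$ is in fact a disjoint union of singletons.
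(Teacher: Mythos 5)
Your proof is correct and is exactly the standard cardinality argument ($|C_f(w)|=\deg(w)+1$ by properness of the total coloring) that the paper implicitly relies on when stating this as an unproved observation. Nothing is missing; the remark that adjacency and the distinguishing property are not needed is accurate.
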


Note that every bipartite graph $G$ has $(\Delta(G)+2)$-adjacent-vertex-\-disting\-uishing total coloring such that $\Delta(G)$ colors are used to color edges of $G$, while the remaining two colors are used to color vertices. Moreover, we may extend this coloring into adjacent-vertex-distinguishing total coloring that uses more than $\Delta(G)+2$ colors by introducing new colors only for vertices.

\begin{theorem}[\cite{delta3}]
If $G$ is a bipartite graph, then $\chi''_a(G)\leq \Delta(G)+2$.\label{avd-bip}
\end{theorem}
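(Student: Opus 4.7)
My plan is to exploit König's edge-coloring theorem together with the bipartition to produce a total coloring in which the two ``vertex colors'' are disjoint from the ``edge colors'', so that adjacency is distinguished automatically by which of the two fresh colors shows up in each color set. The structure of the argument is essentially foreshadowed by the paragraph immediately preceding the theorem statement, which already hints at using $\Delta(G)$ colors on the edges and two additional colors on the vertices.

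Concretely, first I would invoke König's theorem, which gives a proper edge coloring $f_E\colon E(G)\to \{1,\ldots,\Delta(G)\}$ since $G$ is bipartite. Let $(A,B)$ be a bipartition of $G$. I then extend $f_E$ to a total coloring $f$ by setting $f(u)=\Delta(G)+1$ for every $u\in A$ and $f(v)=\Delta(G)+2$ for every $v\in B$. This uses exactly $\Delta(G)+2$ colors. The coloring is proper total: the edge coloring is proper by König's theorem; no edge shares a color with its endpoints because edge colors lie in $\{1,\ldots,\Delta(G)\}$ while vertex colors lie in $\{\Delta(G)+1,\Delta(G)+2\}$; and adjacent vertices lie in different parts, so they receive different colors.

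Next I verify the adjacent vertex distinguishing condition. Take any edge $uv\in E(G)$ with $u\in A$ and $v\in B$. Every edge incident with $u$ or $v$ receives a color in $\{1,\ldots,\Delta(G)\}$, so
\[
C_f(u)\subseteq \{1,\ldots,\Delta(G)\}\cup\{\Delta(G)+1\}, \qquad C_f(v)\subseteq \{1,\ldots,\Delta(G)\}\cup\{\Delta(G)+2\}.
\]
In particular, $\Delta(G)+1\in C_f(u)\setminus C_f(v)$, so $C_f(u)\neq C_f(v)$. Hence $f$ is an avd total $(\Delta(G)+2)$-coloring and $\chi''_a(G)\le \Delta(G)+2$.

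There is no real obstacle here; the only mild point to be careful about is disjointness of the ``edge palette'' from the ``vertex palette'', which is what forces the two color sets at the endpoints of any edge to differ in a guaranteed position. This also explains why the coloring can be safely extended to use arbitrarily many additional colors on the vertices (as noted in the paragraph before the theorem): introducing new colors only on $A\cup B$ preserves both the properness and the disjointness that drives the distinguishing argument.
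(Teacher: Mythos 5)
Your proof is correct and follows exactly the construction the paper sketches in the paragraph preceding the theorem (the paper itself only cites the result from \cite{delta3} without a formal proof): a K\"onig edge coloring with $\Delta(G)$ colors plus two fresh colors on the two sides of the bipartition, with the disjointness of the vertex and edge palettes forcing $\Delta(G)+1\in C_f(u)\setminus C_f(v)$ for every edge $uv$. No gaps; the argument is complete as written.
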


We start our main results from the theorem concerning generalized coronas $G\Tilde{\circ} \Lambda _{i=1}^n H_i$ where the maximum degree of each graph $H_i$ does not exceed the maximum degree of $G$. It is worth emphasizing that in the assumption of the theorem we use only avd total coloring of graph $G$ while graphs $H_i$ are colored totally in any proper way.

\begin{theorem}
Let $G$ and $H_1,H_2,\ldots H_{n_G}$ be connected simple graphs, each one on at least two vertices, such that $\chi''_a(G)\leq \Delta(G)+t$, $t\geq 2$, and $\chi''(H_i)\leq \Delta(H_i)+t_i$, $1\leq t_i\leq t$, for each $i\in[n_G]$. In addition, let $\Delta (G)\geq \Delta (H_i)$ for all $1\leq i\leq n_G$. Then,
$$\chi_a''(G\Tilde{\circ} \Lambda _{i=1}^n H_i) \leq \Delta(G \Tilde{\circ} \Lambda _{i=1}^n H_i)+t.$$\label{thm:gen_cor}
\end{theorem}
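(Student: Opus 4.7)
The plan is to build an avd total $(D+t)$-coloring of the corona, where $D = \Delta(G\Tilde{\circ}\Lambda_{i=1}^{n}H_i) = \max_i(\deg_G(v_i)+|V(H_i)|)$. Note that $D \ge \Delta(G)+2$, because at the maximum-degree vertex of $G$, say $v_{i^*}$, we have $|V(H_{i^*})|\ge 2$; thus at least $D-\Delta(G)\ge 2$ ``fresh'' colors sit outside the palette $[\Delta(G)+t]$ that we will use on $G$.

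The construction proceeds in three stages. First, I would fix an avd total $(\Delta(G)+t)$-coloring $f_G$ of $G$ on colors $[\Delta(G)+t]$, writing $A_i = \{f_G(v_i)\}\cup\{f_G(v_iv_j): v_j\in N_G(v_i)\}$ for the color set present at $v_i$ in $G$. Second, for each $i$ I would properly total-color $H_i$ with $\Delta(H_i)+t_i$ colors drawn from $[\Delta(G)+t]\subseteq[k]$ (possible since $\Delta(H_i)+t_i \le \Delta(G)+t$). Third, I would color the $|V(H_i)|$ fan edges at each $v_i$ with mutually distinct colors, assigning each edge $v_iu$ a color from $L(v_iu) = [k]\setminus(A_i\cup C_{H_i}(u))$. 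This last step is a system-of-distinct-representatives problem on the star $K_{1,|V(H_i)|}$, which I would settle by Hall's theorem: every list $L(v_iu)$ contains the $D-\Delta(G)$ fresh colors, plus at least $\Delta(G)-\deg_G(v_i)+t-\deg_{H_i}(u)-2$ low colors from $[\Delta(G)+t]\setminus(A_i\cup C_{H_i}(u))$.

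With the fan edges placed, I would verify the avd condition edge by edge. For adjacent $v_i\in V(G)$ and $u\in V(H_i)$ we obtain $|C(v_i)| = \deg_G(v_i)+|V(H_i)|+1 > \deg_{H_i}(u)+2 = |C(u)|$ (using that $n\ge 2$ forces $\deg_G(v_i)\ge 1$), so the sets separate by cardinality. For adjacent $v_i,v_j$ in $G$ the avd property of $f_G$ gives $C_G(v_i)\ne C_G(v_j)$, and I would maintain $C(v_i)\ne C(v_j)$ in the corona by a careful choice of the fan-color sets $B_i,B_j$ (exploiting the slack $t-1$ at each such vertex). For adjacent $u,w$ in $H_i$ whose $H_i$-color sets happen to coincide, the only failure mode is that both $f(v_iu)$ and $f(v_iw)$ lie in the common set $C_{H_i}(u)\subseteq[\Delta(G)+t]$; I would preempt this by forcing at least one of these two fan edges to carry a fresh color.

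The main obstacle is stage three. When $v_i$ is of maximum corona-degree and $\deg_G(v_i)<\Delta(G)$, the fresh colors alone are insufficient and low colors must be recruited; low-color availability at an endpoint $u$ can shrink to as few as one when $t=2$. The Hall-type argument must therefore be run carefully, invoking the hypothesis $\Delta(G)\ge \Delta(H_i)$ to keep low-color counts nonnegative across all endpoints, and in tandem ensuring the fan-edge assignment also breaks any color-set coincidence inside $H_i$. This balancing act between fresh and low colors on the fan edges, compatible with the pre-existing colorings of $G$ and each $H_i$, is the technical heart of the proof.
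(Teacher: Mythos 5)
Your overall architecture (avd-color $G$ inside $[\Delta(G)+t]$, properly total-color each $H_i$ inside the same palette, then assign the fan edges, then check the three kinds of adjacencies, separating a $G$-vertex from an $H_i$-vertex by degree) is the paper's, but two essential steps are missing. First, you never prevent a vertex $u\in V(H_i)$ from receiving the color $f_G(v_i)$. Every vertex of $H_i$ is adjacent to $v_i$ in the corona, so this would make the total coloring improper at the vertices, and your fan lists $L(v_iu)=[k]\setminus(A_i\cup C_{H_i}(u))$ only protect the edges. When $\Delta(H_i)+t_i=\Delta(G)+t$ you cannot avoid the conflict merely by choosing the palette; the paper's proof devotes its Case 2 to exactly this, recoloring the offending vertices of $H_i$ with one new color $\Delta(G)+t+1$ and then threading that color back through a single fan edge $v_iu'$ with $u'$ not so recolored. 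Second, your stage three is a plan rather than a proof: you do not verify Hall's condition for the system of distinct representatives (your own count shows a list can shrink to size $t-1$, i.e.\ to a single color when $t=2$); you do not show that recruiting low colors into the fans preserves $C(v_i)\neq C(v_j)$ for adjacent $v_i,v_j\in V(G)$ (once a fan edge at $v_i$ reuses a color of $[\Delta(G)+t]$, the restriction of $C(v_i)$ to the low palette is no longer $C^G(v_i)$ and the inherited distinction can be destroyed); and you do not show that the extra constraint of forcing a fresh color onto one of the two fan edges at each adjacent pair $u,w\in V(H_i)$ with $C^{H_i}(u)=C^{H_i}(w)$ is simultaneously satisfiable with the SDR. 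You yourself defer all of this as ``the technical heart of the proof,'' which is to say the proof is not given.

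For contrast, the paper sidesteps the entire list/SDR machinery by coloring the fan $F_{v_i}$ \emph{only} with brand-new colors $\Delta(G)+t+1,\ldots,\Delta(G)+n_{H_i}+t$ (with the one exception above). Then each $C^{H_i}$-set is completed by a color new to $H_i$, each $C^G(v_i)$ is completed only by colors outside $[\Delta(G)+t]$, and all three avd checks are immediate. The difficulty you flag --- that at a vertex with $\deg_G(v_i)<\Delta(G)$ the $\Delta(G\,\tilde{\circ}\,\Lambda H_i)-\Delta(G)$ fresh colors may be fewer than $n_{H_i}$ --- is a genuine issue for the generalized corona with non-regular $G$ and outer graphs of different orders, and the paper's proof passes over it silently; but identifying it is not resolving it, and your decision to pull low colors into the fans creates the additional distinguishing problems above without actually closing that counting gap. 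If you want a complete argument, adopt the paper's fan-coloring and the Case 1/Case 2 split for the vertex conflict, and address the color-count at non-maximum-degree vertices of $G$ separately.
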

\begin{proof}
Without loss of generality we can assume that $\Delta(H_1)\geq \Delta(H_2) \geq \cdots\geq \Delta(H_{n_G})$. 
Let $|V(G)|=n_G$ and $|V(H_i)|=n_{H_i}$, $i\in[n_G]$. From the assumption, we have $n_G\geq 2$ and $n_{H_i}\geq 2$ for every $i\in[n_G]$. It is clear that
$\delta(G)+\min_i n_{H_i} \leq \Delta(G\Tilde{\circ} \Lambda _{i=1}^n H_i)\leq \Delta(G)+\max_i n_{H_i}$ and the maximum degree of the generalized corona can be realized only by vertices of $G$. 

In order to obtain adjacent vertex distinguishing total $(\Delta(G\Tilde{\circ} \Lambda _{i=1}^n H_i)+t)$-coloring $f$ we start from any avd total $(\Delta (G)+t)$-coloring of $G$. We will extend the coloring for all graphs $H_i$, $i\in [n_G]$, and the relevant fans. We consider $v_i\in V(G)$ with the relevant graph $H_i$, for consecutive $i\in[n_G]$. We apply one of the following cases, depending on the relation between degrees of $G$ and $H_i$.

\begin{description}
\item[Case 1.] $\Delta(G) > \Delta(H_i)$ or $\Delta(G) = \Delta(H_i)$, but $t_i<t$.

We color vertices and edges of $H_i$ in a proper way with $\Delta(H_i)+t_i$ colors from the set $[\Delta(H)+t_i+1]$, but we do not use the color assigned to $v_i$ in $f|_G$. Since $\Delta(H_i)+t_i <\Delta(G)+t$, it is doable. Finally, we use colors $\Delta(G)+t+1, \ldots, \Delta(G)+n_{H_i}+t$ to color $n_{H_i}$ edges in the fan $F_{v_i}$. Note, that these $n_{H_i}$ colors are not used in either $G$ or $H_i$.

\item[Case 2.] $\Delta(G) = \Delta(H_i)$ and $t_i=t$.

If we are able to color graph $H_i$ with $\Delta(H_i)+t$ colors in a proper total way such  that color $f(v_i)$ is not used to color vertices in $H_i$, then we do it. Next, we assign colors $\Delta(G)+t+1, \ldots, \Delta(G)+n_{H_i}+t$ to color $n_{H_i}$ edges in the fan $F_{v_i}$. Otherwise, we consider any total $(\Delta(H_i)+t)$-coloring of $H_i$. Note that taking exactly such a coloring of $H_i$ to $G\Tilde{\circ} \Lambda _{i=1}^n H_i$ results in improper partial total coloring. The vertices in $H_i$ that are assigned color $f(v_i)$ need to be recolored into a new color $\Delta(G)+t+1$ and such a modified total coloring of $H_i$ is taken to $G\Tilde{\circ} \Lambda _{i=1}^n H_i$.
Because $n_{H_i} \geq 2$ and $H_i$ is connected, then we are able to choose one vertex in $H_i$ not colored with $\Delta(G)+t+1$, let us say vertex $u'$, and the edge $v_iu'$ in the fan $F_{v_i}$ can be assigned color $\Delta(G)+t+1$. The rest of edges in the fan $F_{v_i}$ are colored with $\Delta(G)+t+2, \ldots, \Delta(G)+n_H+t$ colors, not used earlier either in $G$ or in $H_i$.

\end{description}

Finally, after coloring all outer graphs and the relevant fans, the total coloring of the generalized corona is proper. We claim that it is adjacent vertex distinguishing.
In order to justify this we consider the following cases.
\begin{itemize}
    \item Let $a$ and $b$ be two adjacent vertices in $H_i$, i.e. $ab\in E(H_i)$ for any $i\in[n_G]$. Note that $C^{H_i}(a)$, as well as $C^{H_i}(b)$, is completed only by one color in the whole avd total coloring of $G\Tilde{\circ} \Lambda _{i=1}^n H_i$. In Case 1 the colors used to color edges in $F_{v_i}$ were different from those in $C^{H_i}(a) \cup C^{H_i}(b)$ and they have not been used in $H$ before. In Case 2, at least one of these two vertices $a$ and $b$ is completed by a new color not used earlier. 
    Thus, $C^{G\Tilde{\circ} \Lambda _{i=1}^n H_i}(a)\neq C^{G\Tilde{\circ} \Lambda _{i=1}^n H_i}(b)$.
    \item Let $a$ and $b$ be any two adjacent vertices in $G$. We colored them taking into account only graph $G$, i.e. $C^G(a)\neq C^G(b)$, next their color sets were completed by the set of new colors, not used earlier in $G$. Thus, $C^{G\Tilde{\circ} \Lambda _{i=1}^n H_i}(a)\neq C^{G\Tilde{\circ} \Lambda _{i=1}^n H_i}(b)$.
    \item Let $a$ be a vertex of $H_i$ and let  $b$ be a vertex of $G$. Since in the corona $G\Tilde{\circ} \Lambda _{i=1}^n H_i$, $\deg(a)\leq n_{H_i} $ while $\deg(b)\geq n_{H_i}+1$, then $\deg(a) \neq \deg(b)$ and 
    $C^{G\Tilde{\circ} \Lambda _{i=1}^n H_i}(a)\neq C^{G\Tilde{\circ} \Lambda _{i=1}^n H_i}(b)$.
\end{itemize}

\end{proof}

As a consequence of the previous theorem, for the case when $H_i \simeq H_j$ for any $i,j \in [n_G]$, we get the following corollary.
\begin{corollary}
Let $G$ and $H$ be connected simple graphs on at least two vertices, for which $\chi_{a}''(G)\leq \Delta (G)+t$ with $t\geq 2$ and $\chi''(H)\leq \Delta (H)+t'$ with $1 \leq t'\leq t$. If $\Delta(G)\geq \Delta(H)$, then $$\chi_{a}''(G \circ H)\leq \Delta(G \circ H)+t.$$\label{theo:crazy}
\end{corollary}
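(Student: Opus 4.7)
The plan is to derive this as a direct specialization of Theorem \ref{thm:gen_cor}. Recall that the paper defines the simple corona as the case of the generalized corona in which all outer graphs coincide, i.e., $G \circ H = G \Tilde{\circ} \Lambda_{i=1}^{n_G} H_i$ when $H_1 \simeq H_2 \simeq \cdots \simeq H_{n_G} \simeq H$. Thus it suffices to instantiate Theorem \ref{thm:gen_cor} at $H_i = H$ for every $i \in [n_G]$.

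I would verify that the hypotheses of the corollary imply those of the theorem. First, $\chi_a''(G) \leq \Delta(G) + t$ with $t \geq 2$ is assumed in both statements. Second, setting $t_i := t'$ for every $i \in [n_G]$, the assumption $\chi''(H) \leq \Delta(H) + t'$ with $1 \leq t' \leq t$ gives exactly $\chi''(H_i) \leq \Delta(H_i) + t_i$ with $1 \leq t_i \leq t$. Third, the assumption $\Delta(G) \geq \Delta(H)$ immediately yields $\Delta(G) \geq \Delta(H_i)$ for every $i \in [n_G]$. All three hypotheses of Theorem \ref{thm:gen_cor} are therefore satisfied.

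Invoking that theorem then gives
$$\chi_a''(G \circ H) \;=\; \chi_a''\bigl(G \Tilde{\circ} \Lambda_{i=1}^{n_G} H_i\bigr) \;\leq\; \Delta\bigl(G \Tilde{\circ} \Lambda_{i=1}^{n_G} H_i\bigr) + t \;=\; \Delta(G \circ H) + t,$$
which is the desired bound. There is no real obstacle here: the corollary is a straightforward specialization and the entire construction (proper avd total coloring of $G$, extensions through Case 1 or Case 2 depending on the comparison of maximum degrees, and the distinguishing argument for adjacent pairs) is inherited verbatim from the proof of Theorem \ref{thm:gen_cor}.
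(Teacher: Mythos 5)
Your proposal is correct and matches the paper exactly: the paper states this corollary as an immediate consequence of Theorem \ref{thm:gen_cor} for the case $H_1 \simeq \cdots \simeq H_{n_G} \simeq H$, giving no separate proof. Your verification that the hypotheses specialize correctly (with $t_i := t'$ for all $i$) is precisely the intended argument.
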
 


\begin{corollary}Let $G$ and $H$ be connected simple graphs on at least two vertices, for which $\Delta(G)\geq \Delta(H)$. 
\begin{enumerate}
    \item If Conjecture~\ref{conj:zhang} holds for $G$ and Conjecture~\ref{tcc:conj} holds for $H$, then $$\chi_{a}''(G \circ H)\leq \Delta(G \circ H)+3.$$
    \item If Conjecture~\ref{conj:zhang} holds for $G$ and $H$, then $$\chi_{a}''(G \circ H)\leq \Delta(G \circ H)+3.$$\label{theo:HlessG}
        \item If $G$ and $H$ are bipartite graphs. Then, $$\chi_a''(G \circ H) \leq \Delta(G \circ H)+2.$$\hfill $\Box$
\end{enumerate}
\end{corollary}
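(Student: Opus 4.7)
The plan is to derive all three items as immediate specializations of Corollary~\ref{theo:crazy}: in each case I would simply match the hypotheses of that corollary to the assumptions at hand, selecting the parameters $t$ and $t'$ appropriately and verifying $t\geq 2$ and $1\leq t'\leq t$. The hypothesis $\Delta(G)\geq \Delta(H)$ carries over verbatim in every item, so no work is needed there.

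For item~(1), the assumption that Conjecture~\ref{conj:zhang} holds for $G$ gives $\chi_a''(G)\leq \Delta(G)+3$, so I would take $t=3$. The assumption that Conjecture~\ref{tcc:conj} holds for $H$ gives $\chi''(H)\leq \Delta(H)+2$, so $t'=2$ is admissible and clearly satisfies $1\leq t'\leq t$. Corollary~\ref{theo:crazy} then yields the claimed $\chi_a''(G\circ H)\leq \Delta(G\circ H)+3$. For item~(2), I would again set $t=3$ using Conjecture~\ref{conj:zhang} for $G$. For the outer factor I use the trivial bound $\chi''(H)\leq \chi_a''(H)$ combined with Conjecture~\ref{conj:zhang} applied to $H$, obtaining $\chi''(H)\leq \Delta(H)+3$, so $t'=3=t$ is admissible. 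A second application of Corollary~\ref{theo:crazy} delivers the same bound $\Delta(G\circ H)+3$.

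For item~(3), both $G$ and $H$ are bipartite, so Theorem~\ref{avd-bip} gives $\chi_a''(G)\leq \Delta(G)+2$, allowing me to take $t=2$. Since bipartite graphs satisfy the Total Coloring Conjecture (as already noted in the paragraph preceding Theorem~\ref{avd-bip}: a proper $\Delta$-edge-coloring from K\"onig's theorem combined with a proper $2$-vertex-coloring gives $\chi''(H)\leq \Delta(H)+2$), I may set $t'=2=t$. Corollary~\ref{theo:crazy} then produces the sharper bound $\chi_a''(G\circ H)\leq \Delta(G\circ H)+2$.

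The main engine is Corollary~\ref{theo:crazy}, which in turn is proved via Theorem~\ref{thm:gen_cor}; once that is in hand, the argument here is essentially bookkeeping. The only point that requires a brief pause is in item~(3), where one must recall the classical fact that bipartite graphs satisfy TCC; I do not anticipate any genuine obstacle beyond that.
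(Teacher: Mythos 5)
Your proposal is correct and follows exactly the route the paper intends: the corollary is stated with a terminating $\Box$ as an immediate specialization of Corollary~\ref{theo:crazy}, and your choices $t=3,\,t'=2$ (item 1), $t=t'=3$ via $\chi''(H)\leq\chi_a''(H)$ (item 2), and $t=t'=2$ using Theorem~\ref{avd-bip} and the total $(\Delta+2)$-colorability of bipartite graphs (item 3) are precisely the required bookkeeping.
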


Observe that $\Delta(G\circ^l H)=\Delta(G)+l\cdot n_H$, for any $l\geq 1$. 
If $\Delta(H) \leq \Delta(G)$ then 
we immediately have $\Delta(H) \leq \Delta(G \circ^{l-1} H)$, for any $l\geq 2$. Hence we have the following generalization of Corollary \ref{theo:crazy}.

\begin{corollary} Let $G$ and $H$ be connected simple graphs on at least two vertices, for which $\Delta(G)\geq \Delta(H)$. 
\begin{enumerate}
\item If Conjecture~\ref{conj:zhang} holds for $G$ and Conjecture~\ref{tcc:conj} holds for $H$, then $$\chi_{a}''(G \circ^l H)\leq \Delta(G \circ^l H)+3,$$
for any integer $l\geq 2$.
    \item If Conjecture~\ref{conj:zhang} holds for $G$ and $H$, then $$\chi_{a}''(G \circ^l H)\leq \Delta(G \circ^l H)+3,$$ for any integer $l\geq 2$.
    \item If $G$ and $H$ are bipartite graphs, then $$\chi_{a}''(G \circ^l H)\leq \Delta(G \circ^l H)+2,$$ for any integer $l\geq 2$.\hfill $\Box$
\end{enumerate}
\end{corollary}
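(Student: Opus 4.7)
The plan is to prove all three parts simultaneously by induction on $l$, treating Corollary~\ref{theo:HlessG} as the base case $l=1$ and invoking Corollary~\ref{theo:crazy} for the inductive step. Given $l\geq 2$, I set $G' = G\circ^{l-1} H$ so that, by the recursive definition, $G\circ^l H = G'\circ H$. The whole proof then reduces to verifying the three hypotheses of Corollary~\ref{theo:crazy} for the pair $(G', H)$ with an appropriate choice of the parameter $t$: $t=3$ for parts (1) and (2), and $t=2$ for part (3).

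First, $G'$ is a connected simple graph on at least two vertices, since the corona of connected simple graphs on at least two vertices has the same property. The inductive hypothesis supplies the avd total bound $\chi_a''(G') \leq \Delta(G') + t$ with the appropriate $t$. The degree condition $\Delta(G') \geq \Delta(H)$ is exactly the observation stated just before the corollary: $\Delta(G') = \Delta(G) + (l-1)n_H \geq \Delta(G) \geq \Delta(H)$. It remains to produce a proper total $(\Delta(H)+t')$-coloring of $H$ with $1\leq t'\leq t$. In part~(1) this is Conjecture~\ref{tcc:conj} applied to $H$, giving $t'=2\leq 3$. In part~(2), Conjecture~\ref{conj:zhang} for $H$ yields $\chi''(H)\leq \chi_a''(H)\leq \Delta(H)+3$, so $t'=3\leq 3$. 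In part~(3), Theorem~\ref{avd-bip} for the bipartite graph $H$ gives $\chi''(H)\leq \Delta(H)+2$, so $t'=2\leq 2$.

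With all three hypotheses in place, Corollary~\ref{theo:crazy} applied to $(G',H)$ yields
\[
\chi_a''(G\circ^l H) \;=\; \chi_a''(G'\circ H) \;\leq\; \Delta(G'\circ H) + t \;=\; \Delta(G\circ^l H) + t,
\]
which closes the induction. Note that for part~(3) I do not need $G'$ itself to be bipartite (it generally is not, as already $G\circ H$ contains triangles when $H$ has any edge); what propagates through the induction is only the numerical bound $\chi_a''(G\circ^{l-1}H)\leq \Delta(G\circ^{l-1}H)+2$, which is exactly what Corollary~\ref{theo:crazy} produces at each step.

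I expect no substantive obstacle here: the real combinatorial work is already absorbed into Corollary~\ref{theo:crazy}, and the two equalities announced immediately before the statement, namely $\Delta(G\circ^l H)=\Delta(G)+l\cdot n_H$ and $\Delta(H)\leq \Delta(G\circ^{l-1} H)$ for $l\geq 2$, supply precisely the bookkeeping needed for the induction to carry. The only minor care required is to match the parameters $t$ and $t'$ to each of the three hypotheses, which is handled uniformly above.
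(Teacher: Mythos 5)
Your proof is correct and matches the paper's intended argument: the paper leaves this corollary unproved precisely because the two displayed observations preceding it ($\Delta(G\circ^l H)=\Delta(G)+l\cdot n_H$ and $\Delta(H)\leq\Delta(G\circ^{l-1}H)$) set up exactly the induction on $l$ via Corollary~\ref{theo:crazy} that you carry out. Your explicit bookkeeping of $t$ and $t'$ in each part, and the remark that only the numerical bound (not bipartiteness) needs to propagate in part~(3), are accurate and fill in what the paper treats as immediate.
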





One can ask what about the Conjecture \ref{conj:zhang} for coronas $G\circ H$ where $\Delta(H)>\Delta(G)$. We also ask how big the difference between maximum degrees can be to be sure that AVDTC Conjecture holds. We partially answer these questions.


\begin{theorem}
Let $G$ and $H$ be connected simple graphs on at least two vertices, for which Conjecture~\ref{conj:zhang} holds. Let $\Delta(H)= \Delta(G)+1$. Then, $$\chi_a''(G\circ H) \leq \Delta(G \circ H)+3.$$\label{thm:diff1}
\end{theorem}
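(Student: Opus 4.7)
The plan is to combine an AVD total $(\Delta(G)+3)$-coloring $f$ of $G$ on the palette $[1,\Delta(G)+3]$ with an AVD total $(\Delta(H)+3)=(\Delta(G)+4)$-coloring $g$ of $H$ on the palette $[1,\Delta(G)+4]$ into a coloring of $G\circ H$ that uses the palette $[1,\Delta(G)+n_H+3]$. I apply $g$ to each copy $H_i$, recolor every $u\in V(H_i)$ with $g(u)=f(v_i)$ to the fresh top color $\Delta(G)+n_H+3$ (this is forced by the propriety of the fan edge $v_iu$), and then paint the fan $F_{v_i}$ bijectively with the $n_H$ colors $[\Delta(G)+4,\Delta(G)+n_H+3]$. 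The whole plan hinges on the fan assignment, because the only potential conflicts inside $H_i$ involve the two ``exceptional'' colors: $\Delta(G)+4$, which is shared with $g$'s palette, and $\Delta(G)+n_H+3$, which appears on the recolored vertices.

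The combinatorial heart is a double count: since $|C^H_g(u)|=\deg_H(u)+1\leq\Delta(G)+2$ while the palette has $\Delta(G)+4$ colors, $\sum_{u\in V(H)}\bigl(\Delta(G)+4-|C^H_g(u)|\bigr)\geq 2n_H$, so at least two colors of $[1,\Delta(G)+4]$ are missing from some $C^H_g(u)$. Restricting the same count to $V(H)\setminus S$, where $S=\{u:g(u)=f(v_i)\}$ is a nonempty independent set in the connected $H$, shows that whenever $|V(H)\setminus S|\geq 2$ I can demand the ``missing color'' vertex $u^*$ to lie in $V(H)\setminus S$. After a transposition of $g$ that fixes $f(v_i)$ (always available since at least two valid choices of the missing color are produced and at most one equals $f(v_i)$), I may assume $\Delta(G)+4\notin C^H_g(u^*)$ and assign color $\Delta(G)+4$ to $v_iu^*$, color $\Delta(G)+n_H+3$ to $v_iu^{**}$ for any $u^{**}\in V(H)\setminus S$ with $u^{**}\neq u^*$, and the middle colors arbitrarily to the remaining $n_H-2$ fan edges. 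The single remaining obstruction $|V(H)\setminus S|=1$ forces $H$ to be the star $K_{1,n_H-1}$, which I handle separately by choosing $u^*$ to be a leaf whose edge to the center does not carry color $\Delta(G)+4$ (such a leaf exists because at most one of the $n_H-1=\Delta(G)+1\geq 2$ edges at the center can use $\Delta(G)+4$) and $u^{**}$ to be the center.

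Propriety is immediate from the construction, as the fan palette's only possible overlaps with the rest of the coloring were engineered away. The AVD verification splits into the standard three cases. Adjacent $v_i,v_j\in V(G)$ are distinguished because $f$ is AVD on $G$ and both $v_i,v_j$ inherit the same extra set $[\Delta(G)+4,\Delta(G)+n_H+3]$ from their fans. A pair $v_i\in V(G)$ and $u\in V(H_i)$ are distinguished by the degree observation, as $\deg_{G\circ H}(v_i)\geq n_H+1>n_H\geq\deg_{G\circ H}(u)$. For adjacent $a,b\in V(H_i)$, if neither is recolored, the AVD property of $g$ combined with $c_a\neq c_b$ suffices, because any equality of the augmented color sets would force $c_a=c_b=\Delta(G)+4$; if exactly one of $a,b$ is recolored, say $a$, then $\Delta(G)+n_H+3\in C^{G\circ H}(a)$ separates it from $C^{G\circ H}(b)$ unless $b=u^{**}$, and in that remaining sub-case $a\neq u^*$ (since $u^*\notin S$), so the fan color $c_a\in[\Delta(G)+5,\Delta(G)+n_H+2]$ at $a$ lies outside $C^{G\circ H}(b)$. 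The main obstacle, as signaled above, lies precisely in securing this fan assignment uniformly across all copies $H_i$, together with the auxiliary handling of the star sub-case, where a direct size-gap comparison of color sets replaces the element-chasing argument.
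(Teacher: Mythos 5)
Your construction is correct and is essentially the paper's own argument: AVD-color $G$ with $\Delta(G)+3$ colors, AVD-color each copy of $H$ with $\Delta(G)+4$ colors arranged (via a color transposition exploiting the two missing colors at every vertex) so that some vertex $u^*$ not colored $f(v_i)$ has $\Delta(G)+4$ absent from its color set, recolor the $f(v_i)$-colored vertices of $H$ to one fresh color, route that same fresh color through the fan to a second vertex outside the conflict class, and finish the fan with brand-new colors. The only deviations are cosmetic: you recolor to the top color $\Delta(G)+n_H+3$ where the paper uses $\Delta(G)+5$, and you isolate the degenerate case as ``$H$ is a star'' (where the second special vertex may fail to exist) whereas the paper separates off all bipartite $H$ up front via Theorem~\ref{avd-bip}.
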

\begin{proof}
If $H$ is a bipartite graph, then, due to Theorem \ref{avd-bip}, we may start with any avd total $(\Delta(H)+2)$-coloring of each copy of $H$. In this case we need the same number of colors for avd total $(\Delta(G)+3)$-coloring of $G$. Hence, this case can be seen as equivalent to the one given in Case 2 in the proof of Theorem \ref{theo:crazy}. So, let us assume $H$ is not bipartite. Then an avd total $(\Delta(G \circ H)+3)$-coloring $f$ of $G\circ H$ can be obtained as follows.
\begin{enumerate}
    \item Color vertices and edges of graph $G$ with $\Delta(G)+3$ colors in adjacent vertex distinguishing way. We will refer to this part of the coloring as to $f|_{G}$. 
    \item 
    We will extend our avd total coloring $f|_{G}$ into avd total coloring of each copy of $H$ in $G\circ H$. 
    Let $v\in V(G)$, $c=f(v)$, and we consider the relevant copy of $H$.
    Let $f|_H$ denote an avd total $(\Delta (H)+3)$-coloring of $H$ such that there is a vertex $u\in V(H)$ for which $f(u)\neq c$ and the color $\Delta (H)+3$ does not belong to color set of $u$ in $H$, i.e. $\Delta (H)+3 \not \in C_{f|_H}^H(u)$.
    
    Note that such a coloring of $H$ always exists due to the fact that we have at least two missing colors in a color set of every vertex in $H$. So we color the chosen $H$, $H\subset G\circ H$, in the desirable way.
    
    Since $\Delta (H)+3=\Delta (G)+4$, the color $\Delta (H)+3$ does not belong to $C_{f|_H}^H(u)$, and $\Delta (G)+4$ is not used in $f|_G$, we can color an edge $uv$ in the fan $F_v$ with $\Delta (G)+4$.
     
    Note that after this step the partial total coloring $f$ of $G\circ H$ may not be proper. We need to fix it, if it is the case. We do it in the following way. If the coloring is improper, i.e. there are vertices in $H$ colored with $c$, we recolor them to $\Delta(G)+5$. Note that after this recoloring the new coloring, limited to graph $H$, is certainly proper avd-total-coloring, while the partial total coloring of the whole corona, received so far, is proper. 
    
    \item Next, we will complete the coloring of the fan $F_v$. If the coloring was initially not proper, we choose one vertex $w\in V(H)$ such that $w\neq u$ and $w$ is not colored with $c$. Such a vertex certainly exists, because $H$ is not bipartite. Note that $vw$ can be colored with $\Delta(G)+5$. We do it. The rest uncolored $n_H-2$ edges in $F_v$ are colored with new colors: $\Delta(G)+6, \ldots, \Delta(G)+n_H+3$. Otherwise, we color all uncolored $n_H-1$ edges in $F_v$ with $\Delta(G)+5, \ldots, \Delta(G)+n_H+3$.
     \end{enumerate}
    We repeat Step 2 and Step 3 for every $v\in V(G)$ with the relevant copy of $H$.

  Observe that for any vertex $v\in V(G)$, $C^{G\circ H}(v)=C^G(v)\cup \{ \Delta (G) +4,...,\Delta (G)+n_H+3\}$. Since the coloring $f|_{G}$ was adjacent vertex distinguishing, then also $f$, in accordance to vertices of $G$, is avd. In addition, for every $uw\in E(H)$, there exists at least one color $t\in \{1,2,..,\Delta (G)+3\}$ such that $t\notin C^H(u)\cap C^H(w)$. Hence, the obtained $(\Delta(G\circ H)+3)$-coloring of the whole corona $G\circ H$ is proper adjacent vertex distinguishing total coloring.
\end{proof}

\begin{theorem}
Let $G$ be a connected simple graph on at least two vertices, for which Conjecture~\ref{conj:zhang} holds. And let $H$ be the complete graph with order $n_H=\Delta (G)+ 3$.
Then $$\chi_a''(G\circ H) \leq \Delta(G \circ H)+3.$$ \label{thm:complete}
\end{theorem}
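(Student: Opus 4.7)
The plan is to construct explicitly an adjacent vertex distinguishing total coloring of $G\circ H$ using the allowed $\Delta(G\circ H)+3 = 2\Delta(G)+6$ colors, which I split into two palettes: $A=\{1,\dots,\Delta(G)+3\}$, used for $G$ and for the interior of every copy of $H$, and $B=\{\Delta(G)+4,\dots,2\Delta(G)+6\}$, which has size $n_H$ and is reserved for fan edges. By the hypothesis on $G$, I would first fix any avd total $(\Delta(G)+3)$-coloring $f|_G$ of $G$ with colors from $A$.

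The core step is to extend, independently for each $v\in V(G)$, the coloring over the attached $H_v=K_{n_H}$ and its fan $F_v$. I would produce a proper total $(\Delta(G)+4)$-coloring of $H_v$ on the palette $\{1,\dots,\Delta(G)+4\}$ whose vertex-color set equals $\{1,\dots,\Delta(G)+4\}\setminus\{f(v)\}$; in particular, $f(v)$ then appears on no vertex of $H_v$. Such a coloring is obtained by a parity case split: for $n_H$ odd, start from a proper total $n_H$-coloring of $K_{n_H}$ (in which each color appears on a unique vertex) and recolor the vertex originally colored $f(v)$ by the new color $\Delta(G)+4$; for $n_H$ even, embed $K_{n_H}$ into $K_{n_H+1}$, take a proper total $(n_H+1)$-coloring of $K_{n_H+1}$ in which the added vertex receives color $f(v)$, and delete that extra vertex. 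Let $w^\ast\in V(H_v)$ be the vertex now colored $\Delta(G)+4$. The edges of $H_v$ of color $\Delta(G)+4$ form a matching $M$ avoiding $w^\ast$; it is empty in the odd case, and in the even case it lies in $V(H_v)\setminus\{w^\ast\}$ which has the odd cardinality $n_H-1$, so in either case $M$ misses some vertex $u_?$. I would then color the fan edge $vu_?$ with $\Delta(G)+4$ and distribute the remaining $n_H-1$ colors of $B\setminus\{\Delta(G)+4\}$ bijectively over the other fan edges at $v$.

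Properness follows routinely from the palette separation together with the choice of $u_?$. For the avd property, three types of adjacent pairs have to be distinguished. Two $G$-vertices $v,v'$: since the fan at every $v\in V(G)$ uses all of $B$, one has $C(v)=C^G_{f|_G}(v)\cup B$, so $C(v)\ne C(v')$ is inherited from the avd-ness of $f|_G$. A $G$-vertex $v$ and an $H_v$-vertex $u$: they are distinguished by the degree gap, $\deg_G(v)+n_H>n_H$. Two vertices $u,u'\in V(H_v)$ adjacent in $H_v$: the restriction of $C(u)$ to $\{\Delta(G)+5,\dots,2\Delta(G)+6\}$ is the singleton $\{f(vu)\}$ when $u\ne u_?$ and is empty when $u=u_?$, because no color exceeding $\Delta(G)+4$ ever appears inside $H_v$; since distinct fan edges carry distinct colors, these restrictions force $C(u)\ne C(u')$. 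I expect the main obstacle to be the construction of the interior coloring of $H_v$: it must simultaneously avoid $f(v)$ on vertices and leave some $u_?\ne w^\ast$ free of incident $\Delta(G)+4$-edges. The parity split above supplies such a coloring, and the equality $n_H=\Delta(G)+3$ is precisely what makes $|B|=n_H$, so that the $n_H$ fan edges can be painted with $n_H$ fresh colors without ever spilling back into $A$.
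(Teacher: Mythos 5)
Your construction is correct, but it follows a genuinely different route from the paper. The paper starts from an \emph{adjacent vertex distinguishing} total $(\Delta(H)+3)$-coloring of $K_{n_H}$ that avoids $f(v)$ and $\Delta(G)+5$ on vertices, proves by a short contradiction argument that two vertices $u,w$ exist whose (two-element) missing-color sets are disjoint, recolors so that $\Delta(G)+4\in\overline{C^H(u)}$ and $\Delta(G)+5\in\overline{C^H(w)}$, and then uses \emph{two} bridge colors ($\Delta(G)+4$ on $vu$ and $\Delta(G)+5$ on $vw$) before filling the rest of the fan with fresh colors; the distinguishing of $u$ from $w$ then rests on the disjointness of their missing sets. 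You instead build an explicit \emph{proper} total $(\Delta(G)+4)$-coloring of $K_{n_H}$ from the classical total colorings of complete graphs (parity split, each vertex carrying a distinct color and none carrying $f(v)$), use a single bridge color $\Delta(G)+4$ sent to a vertex $u_?$ that avoids it, and recover the avd property inside $H_v$ entirely from the fan: the trace of each color set on $\{\Delta(G)+5,\dots,2\Delta(G)+6\}$ is a distinct singleton (or empty for $u_?$). This is more self-contained --- it does not invoke the known avd total colorings of complete graphs, nor the existence of two vertices with disjoint missing pairs --- and the verification is arguably cleaner; the price is the explicit parity case analysis for $\chi''(K_n)$. One cosmetic remark: your opening sentence claims the interior of each copy of $H$ is colored from $A=\{1,\dots,\Delta(G)+3\}$, whereas the construction actually uses $A\cup\{\Delta(G)+4\}$ there; this is harmless because your properness and distinguishing arguments correctly treat $\Delta(G)+4$ as the one shared color and rely only on the colors $\Delta(G)+5,\dots,2\Delta(G)+6$ being absent from $H_v$, but the statement of the palette split should be adjusted to match.
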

\begin{proof} 
We start from any adjacent vertex distinguishing total $(\Delta(G)+3)$-coloring of $G$. We will refer to this part of the coloring as to $f|_{G}$. An extension of $f|_{G}$ into the whole $(\Delta(G\circ H)+3)$-coloring $f$ of $G\circ H$ is obtained as follows. 

Let $v\in V(G)$, $f(v)=c$ with $c \in [\Delta(G)+3]$, which is equivalent to $c\in [\Delta(H)+1]$. We consider the relevant copy of $H$.

Let $f|_{H}$ be an adjacent vertex distinguishing total $(\Delta(H)+3)$-coloring of $H$, such that for all $u\in V(H)$, $f|_{H}(u)\notin \{ c, \Delta (G)+5\}.$ It is possible because $H=K_{\Delta (G)+3}$ and we use exactly $\Delta(G)+3$ colors to color vertices. 

Observe that for any vertex $u\in V(H)$, $|\overline{C^H(u)}|=2,$ and for any two vertices $u,w$ in $H$, the following holds $0\leq |\overline{C^H(u)} \cap \overline{C^H(w)}| \leq 1.$ 
We claim that there are two vertices  $u,w \in V(H)$ such that  $\overline{C^H(u)} \cap \overline{C^H(w)} =\emptyset$. Otherwise, consider two vertices $x_1,x_2\in V(H)$ such that $\overline{C^H(x_1)} =\{ a, d\},\overline{C^H(x_2)} =\{ b, d\}$. Now, the color $d$ need to belong to a color set of 
at least one vertex.
Let us say $x_3\in V(H)$ is such a vertex that $\overline{C^H(x_3)} =\{ a, b\}$, $a\neq d \neq b$, and for any other vertex $x_4\in V(H)$ we have only one of the following possibilities: $\overline{C^H(x_4)} =\{ a, b\}$, or $\overline{C^H(x_4)} =\{ a, d\}$, or $\overline{C^H(x_4)} =\{ d, b\}$, which is a contradiction. 

Let $\{ u,w\}\subset V(H)$ such that  $\overline{C^H(u)} \cap \overline{C^H(w)} =\emptyset $ with $\overline{C^H(u)} =\{ a, b \},$ $\overline{C^H(w)} =\{ d, g\}$, where $a,b,d,g$ are four different colors other than $c$. In this case, if it is necessary, we can recolor the graph $H$ such that $\overline{C^H(u)} =\{ a, \Delta (G) +4\},\overline{C^H(w)} =\{ d, \Delta (G)+5\}$. Next we color the edges $f(uv)=\Delta (G) + 4$,  $f(wv)=\Delta (G) + 5$ and we use the new colors  $\{ \Delta (G)+6, \Delta (G) + 7,..., \Delta (G)+n_H+3\}$ to the edges  $xv$, where $x\in V(H)\backslash \{ u, w\}.$

Observe that the obtained total $\Delta(G\circ H)+3)$-coloring of $\Delta(G\circ H)$ is adjacent vertex distinguishing. Indeed, due to the fact that $\deg(a)\neq \deg(b)$ for any pair of vertices such that $a\in V(G)$ and $b\in V(H)$, we need to consider only the case of different color sets for two adjacent vertices within graph $G$ or $H$. 
If $a$ and $b$ are two adjacent vertices in $G$, since $C^G(a)\neq C^G(b)$ and next their color sets were completed by the set of new colors not used earlier in $G$, then $C^{G\circ H}(a)\neq C^{G\circ H}(b)$.
On the other hand, if $a$ and $b$ are two adjacent vertices in $H$, then since their color sets from $f|_H$ were completed by one color and in the case where $u \neq a\neq w$ and $u \neq b\neq w$ it was a new color not used earlier in $H$, then we get $C^{G\circ H}(a)\neq C^{G\circ H}(b)$. The only doubt could appear for vertices $u,w$ in $H$, chosen as above. But since $\overline{C^H(u)} \cap \overline{C^H(w)} =\emptyset$, then $\overline{C^{G\circ H)}(u)}\cap [\Delta(G)+5]=a$ and $\overline{C^{G\circ H)}(w)}\cap [\Delta(G)+5]=d$, $a\neq d$. Thus $C^{G\circ H}(u)\neq C^{G\circ H}(w)$ and the proof is complete.

\end{proof}
\begin{proposition}
Let $H$ be a connected simple graph with $\Delta(H)\geq 3$ for which Conjecture \ref{conj:zhang} holds. Let  $\alpha(H)\geq 2$ and let $u_1$ and $u_2$ be any two non-adjacent vertices in $H$. Then for any color $c\in [\Delta(H)+1]$ there is an avd total $(\Delta(H)+3)$-coloring $f$ of $H$ such that all four conditions hold:
\begin{enumerate}
    \item $\Delta(H)+2 \in \overline{C^H_f(u_1)}$,
    \item $\Delta(H)+3 \in \overline{C^H_f(u_2)}$,
    \item $f(u_1) \neq c$,
    \item $f(u_2) \neq c$.
\end{enumerate}\label{prop:demand}
\end{proposition}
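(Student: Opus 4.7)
The plan is to start with any avd total $(\Delta(H)+3)$-coloring $f_0$ of $H$, which exists because Conjecture~\ref{conj:zhang} is assumed to hold for $H$, and then to derive the required coloring $f$ as $f = \sigma \circ f_0$ for a well-chosen global permutation $\sigma$ of the palette $[\Delta(H)+3]$. Since relabeling colors preserves properness and the equality/inequality pattern of color sets, every such $f$ is automatically an avd total $(\Delta(H)+3)$-coloring of $H$, so the whole problem reduces to choosing $\sigma$ correctly.

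Writing $\tau = \sigma^{-1}$ and using the identity $\overline{C^H_f(v)} = \sigma(\overline{C^H_{f_0}(v)})$, the four target conditions translate respectively into: $\tau(\Delta(H)+2) \in \overline{C^H_{f_0}(u_1)}$, $\tau(\Delta(H)+3) \in \overline{C^H_{f_0}(u_2)}$, $\tau(c) \neq f_0(u_1)$, and $\tau(c) \neq f_0(u_2)$. Since $c \leq \Delta(H)+1$, the three inputs $c$, $\Delta(H)+2$, $\Delta(H)+3$ are pairwise distinct, so it suffices to prescribe $\tau$ at those three positions and then extend arbitrarily to a bijection of $[\Delta(H)+3]$.

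The driving observation is that $|\overline{C^H_{f_0}(v)}| \geq (\Delta(H)+3) - (\deg(v)+1) \geq 2$ for every vertex $v$. With this, I would fix the three required values greedily in the order above: first $\tau(\Delta(H)+2) \in \overline{C^H_{f_0}(u_1)}$; then $\tau(\Delta(H)+3) \in \overline{C^H_{f_0}(u_2)} \setminus \{\tau(\Delta(H)+2)\}$, which is non-empty because $|\overline{C^H_{f_0}(u_2)}| \geq 2$; and finally $\tau(c) \in [\Delta(H)+3] \setminus \{f_0(u_1), f_0(u_2), \tau(\Delta(H)+2), \tau(\Delta(H)+3)\}$, which has at least $\Delta(H)-1 \geq 2$ elements by the hypothesis $\Delta(H) \geq 3$.

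The main (though rather mild) obstacle is checking that these three greedy choices are simultaneously feasible in the worst case; the counts above show that they are, precisely because $\Delta(H) \geq 3$. The hypothesis $\alpha(H) \geq 2$ plays no active role in the construction beyond guaranteeing that two non-adjacent vertices $u_1$ and $u_2$ exist in the first place. Once $\tau$ is fixed, the verification that $f = \tau^{-1} \circ f_0$ meets conditions (1)--(4) is a one-line check using the translation displayed above.
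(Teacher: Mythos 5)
Your proof is correct and follows essentially the same idea as the paper's: exploit the fact that every vertex misses at least two of the $\Delta(H)+3$ colors and relabel colors globally (which preserves the proper and avd properties) so as to force the required memberships and avoid $c$. The paper realizes the relabeling as an initial normalization of $f(u_1),f(u_2)$ followed by two colour transpositions, whereas your single permutation $\sigma$ with three prescribed values of $\tau=\sigma^{-1}$ handles all four constraints simultaneously and is, if anything, the tidier bookkeeping.
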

\begin{proof}
We can easily start from any avd total $(\Delta(H)+3)$-coloring $f$ of $H$ such that $f(u_1) =c_1 \neq c$ and $f(u_2)=c_2 \neq c$ and $c_1, c_2\in [\Delta(H)+1]$. It may happen that $c_1=c_2$. 

Now, let us assume that $\overline{C^H_f(u_1)}$ does not contain $\Delta(H)+2$. Since each vertex has at least two missing colors, let us say $\{a_1,a_2\}\subset \overline{C^H_f(u_1)}$, then we can exchange one of missing colors, let us say $a_1$, with $\Delta(H)+2$, and vice versa, in the whole graph $H$. 
Similarly, if $\overline{C^H_f(u_2)}$ does not contain $\Delta(H)+3$, let $\{b_1,b_2\}\subset \overline{C^H_f(u_2)}$. We can choose one of missing colors, different than $a_1$, and exchange it into $\Delta(H)+3$, and vice versa, also in the whole graph $H$.

Finally, we receive an avd total $(\Delta(H)+3)$-coloring $f$ of $H$ fulfilling the desirable conditions.
\end{proof}

\begin{theorem}
Let $G$ an $H$ be connected simple graphs on at least two vertices, for which Conjecture~\ref{conj:zhang} holds. Let $\Delta(H)= \Delta(G)+2$ and $\alpha(H)\geq 2$.
Then $$\chi_a''(G\circ H) \leq \Delta(G \circ H)+3.$$
\label{thm:diff2}
\end{theorem}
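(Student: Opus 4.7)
The plan is to adapt the strategy used in Theorem~\ref{thm:diff1}, now invoking Proposition~\ref{prop:demand} to free up two specific missing colors at two non-adjacent vertices of $H$ so that two of the fan edges can re-use the colors $\Delta(G)+4$ and $\Delta(G)+5$ instead of demanding a brand-new color for each fan edge. I would partition the target palette of $\Delta(G\circ H)+3=\Delta(G)+n_H+3$ colors into three blocks: $[\Delta(G)+3]$ to carry an avd total coloring $f|_G$ of the centre graph $G$; the two \emph{upper} colors $\Delta(G)+4$ and $\Delta(G)+5$, which coincide with $\Delta(H)+2$ and $\Delta(H)+3$ and will serve simultaneously inside each copy of $H$ and on two distinguished fan edges; and a \emph{fresh} block $\{\Delta(G)+6,\ldots,\Delta(G)+n_H+3\}$ of $n_H-2$ colors reserved for the remaining fan edges (these exist because $n_H\geq\Delta(H)+1=\Delta(G)+3>2$).

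First I would fix any avd total $(\Delta(G)+3)$-coloring $f|_G$ of $G$ and, using $\alpha(H)\geq 2$, a pair of non-adjacent vertices $u_1,u_2\in V(H)$ (the same pair can be used in every copy). Then for each $v\in V(G)$, writing $c:=f(v)\in[\Delta(G)+3]=[\Delta(H)+1]$, I would apply Proposition~\ref{prop:demand} to the copy of $H$ attached to $v$ (legitimate since $\Delta(H)=\Delta(G)+2\geq 3$), obtaining an avd total $(\Delta(H)+3)$-coloring of that copy with $\Delta(G)+4\notin C^H(u_1)$, $\Delta(G)+5\notin C^H(u_2)$, and $f(u_i)\neq c$ for $i=1,2$. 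I would then complete the fan $F_v$ by setting $f(vu_1)=\Delta(G)+4$, $f(vu_2)=\Delta(G)+5$, and assigning the $n_H-2$ fresh colors bijectively to the remaining edges of $F_v$. Properness is immediate: the colors $\Delta(G)+4,\Delta(G)+5$ are absent from $C^H(u_1),C^H(u_2)$ respectively, differ from $c$, and do not appear on any edge of $G$; the fresh colors conflict with nothing.

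The final step is to check the adjacent-vertex-distinguishing property in three cases. Adjacent pairs in $G$ are distinguished because $f|_G$ was avd and the color set of every $v\in V(G)$ is augmented by exactly the same set of fan contributions $\{\Delta(G)+4,\ldots,\Delta(G)+n_H+3\}$. Mixed adjacent pairs $a\in V(G)$ and $b\in V(H)$ are distinguished by degree, since $\deg_{G\circ H}(a)\geq n_H+1>n_H\geq\deg_{G\circ H}(b)$. The only non-routine case is an edge $uu'$ inside a copy of $H$: the non-adjacency of $u_1,u_2$ forces at least one endpoint, say $u'$, to lie outside $\{u_1,u_2\}$, so $f(vu')\geq\Delta(G)+6$ is a fresh color lying in $C^{G\circ H}(u')$ but not in $C^{G\circ H}(u)$, since $C^H(u)\subseteq[\Delta(G)+5]$ and the fan is properly colored so $f(vu)\neq f(vu')$.

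The main obstacle, and the precise reason Proposition~\ref{prop:demand} is needed, is that because $H$ genuinely requires $\Delta(H)+3=\Delta(G)+5$ colors while only $n_H-2$ fresh colors can be budgeted on top of that, two fan edges must recycle colors already used inside $H$. The proposition guarantees that these two recycled colors can be steered to two prescribed non-adjacent vertices while simultaneously avoiding $c=f(v)$, and the avd verification inside $H$ depends critically both on the recycled colors being missing from the right color sets and on $u_1,u_2$ not being adjacent; verifying that this single, purely local modification leaves the avd property of $H$ intact is the only delicate point in the argument.
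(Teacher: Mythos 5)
There is a genuine gap in your argument, at the point where you assert that ``properness is immediate.'' In the corona $G\circ H$, the centre vertex $v$ is adjacent to \emph{every} vertex of its copy of $H$, so a proper total coloring requires that no vertex of that copy carry the color $c=f(v)$. Proposition~\ref{prop:demand} only guarantees $f(u_1)\neq c$ and $f(u_2)\neq c$; since the copy of $H$ is colored with the full palette $[\Delta(H)+3]=[\Delta(G)+5]\supseteq[\Delta(G)+3]\ni c$, other vertices of $H$ may well (and in general will) receive the color $c$, and each such vertex together with $v$ violates properness. Your construction never repairs this, so the resulting total coloring of $G\circ H$ need not be proper. This is exactly the obstruction the paper's proof spends most of its effort on: after placing $\Delta(G)+4$ on $vu_1$ and $\Delta(G)+5$ on $vu_2$, it recolors every vertex of $H$ originally colored $c$ with the fresh color $\Delta(G)+6$, then places $\Delta(G)+6$ on a fan edge $vx$ where $x$ is a vertex outside $\{u_1,u_2\}$ not recolored to $\Delta(G)+6$ (whose existence requires an argument, and is where non-bipartiteness of $H$ enters), and finally re-checks the distinguishing property for pairs affected by the recoloring. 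The paper also treats bipartite $H$ by a separate K\"onig-type construction rather than via Proposition~\ref{prop:demand}.

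Up to this omission your plan coincides with the paper's non-bipartite case: same palette split, same use of Proposition~\ref{prop:demand} to steer the two recycled colors $\Delta(G)+4,\Delta(G)+5$ onto two non-adjacent vertices while avoiding $c$, and essentially the same three-case avd verification. To close the gap you must add the recoloring step for the $c$-colored vertices of $H$ and then verify (i) that a suitable vertex $x$ exists to receive the fan edge colored $\Delta(G)+6$, and (ii) that the recoloring does not destroy the adjacent-vertex-distinguishing property inside the copy of $H$.
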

\begin{proof}
We start from any avd total $(\Delta(G)+3)$-coloring of $G$. We will refer to this part of the coloring as to $f|_{G}$. Further, we extend this coloring into the whole corona depending on the form of $H$. 

Let $v\in V(G)$ and $f|_G(v)=c$, where $c\in[\Delta(G)+3]$, or in other words $c\in [\Delta(H)+1]$. We consider the relevant copy of $H$. 
If $H$ is bipartite, $H=(V_1\cup V_2,E)$, then by Theorem \ref{avd-bip}, $\chi_a''(H) \leq  \Delta(H)+2$. By K\"{o}nig's theorem, we color the edges of $H$ with $\Delta(H)$ colors: color $c$ and $\Delta(H)-1$ other colors from $[\Delta(H)+1]$.
One color among $\{1,\ldots,\Delta(H)+1\}$ is not used to color edges in $H$. We assign it to color all vertices in $V_1$, while $\Delta(G)+4$ is used to color all vertices in $V_2$.
Next, we choose one vertex $x\in V_1$ and assign $\Delta(G)+4$ to $vx$. Further, we complete the coloring of edges in the fan $F_v$ by coloring the residual uncolored $n_H-1$ edges with different colors from the set 
$\{\Delta(G)+5,\ldots, \Delta(G)+n_H+3\}$. 
We repeat the procedure for all copies of $H$. It is easy to see that the coloring $f$ of $G\circ H$ is adjacent vertex distinguishing total coloring. 

Otherwise, i.e. if $H$ is not bipartite, an extension of $f|_{G}$ into the whole $(\Delta(G\circ H)+3)$-coloring $f$ of $G\circ H$ is obtained as follows. Let $I$ be any independent vertex set in $H$ of size at least 2 and let $u_1,u_2$ be any two vertices from $I$. 

\begin{enumerate}
    \item Color vertices and edges of $H$ in avd total way with $\Delta(H)+3=\Delta(G)+5$ colors in such a way that color $\Delta(G)+4$ is missing in the color set of $u_1$, i.e. $\Delta(G)+4\in \overline{C^H(u_1)}$, and $\Delta(G)+5 \in \overline{C^H(u_2)}$, and $f(u_1)\neq c$, and $f(u_2)\neq c$. This is possible due to Proposition \ref{prop:demand}.
    
    It may happen that the partial total coloring of $G\circ H$ is improper at this stage. We will fix it later.
    \item Assign color $\Delta(G)+4$ to an edge $vu_1$ and color $\Delta(G)+5$ to $vu_2$. 
    \item If the partial total coloring of $G\circ H$ is improper then we recolor vertices colored initially with $c$ into $\Delta(G)+6$.
    
    \item Choose any vertex $x\in V\backslash\{u_1,u_2\}$ not colored with $\Delta(G)+6$ and assign $\Delta(G)+6$ to $vx$. 
    
    Note that such a vertex always exists because $H$ is not bipartite and there exists at least one edge with both endvertices in $V\backslash H$. At least one of them is not colored with $\Delta(G)+6$.
    
    \item Complete the coloring of edges in the fan $F_v$ by coloring the remaining uncolored $n_H-3$ edges with new colors $\Delta(G)+7,\ldots, \Delta(G)+n_H+3$. 
    
    Note, that the only cases for which we really need to check the color sets are adjacent vertices of $x$ which are colored with $\Delta(G)+6$. But edges in the fan $F_v$ joining such a vertex are colored with completely new colors not used any more in $F_v$ and in the copy of $H$ under consideration. So, even for such adjacent vertices, their color sets are different. Thus, the partial coloring of $G\circ H$ is proper and adjacent vertex distinguishing from the point of view of vertices from the copy of $H$. 
\end{enumerate}

We repeat the same procedure for all copies of $H$. Finally, we get an avd total $\Delta(G\circ H)+3$-coloring of $G\circ H$. The justification for two adjacent vertices within $G$ is the same as in the proofs of the previous theorems. The proof is complete. 
\end{proof}

Up to now, since actually Theorem \ref{thm:complete} concerns the case when $\alpha(H)=1$ and $\Delta(H)=\Delta(G)+2$, we proved Conjecture \ref{conj:zhang} for all coronas $G \circ H$ of graphs $G$ and $H$ with $\Delta(H)-\Delta(G)\leq 2$, under some additional assumptions for $G$ and $H$. Now we present a partial result concerning graphs with a greater difference, i.e. let $\Delta(H)=\Delta(G)+k$, $k\geq 3$. We start with $H$ being  bipartite and $k=3$.

\begin{theorem}
Let $G$ be connected simple graphs on at least two vertices, for which Conjecture~\ref{conj:zhang} holds. Let $H=(V_1\cup V_2,E)$ be a bipartite graph with $\Delta(H)= \Delta(G)+3$. Then $$\chi_a''(G\circ H) \leq \Delta(G \circ H)+3.$$\label{thm:bip}
\end{theorem}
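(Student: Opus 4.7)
We mirror the extension strategy of Theorems~\ref{thm:diff1}--\ref{thm:diff2}: fix any avd total $(\Delta(G)+3)$-coloring $f|_G$ of $G$ and extend it copy by copy to each $H_v$ together with $F_v$. The new difficulty is that $\Delta(H)=\Delta(G)+3$ now coincides with the size of the ``low'' palette $[\Delta(G)+3]$, so a K\"onig edge-coloring of $H$ exhausts that palette and leaves no spare low color to serve as the vertex color of one of the parts (as was done in the bipartite subcase of Theorem~\ref{thm:diff2}). The plan is to use the ``high'' palette $\{\Delta(G)+4,\dots,\Delta(G)+n_H+3\}$, which has exactly $n_H$ colors, both for the vertex coloring of $H_v$ and for the fan edges, after partitioning it between the two parts of $H$.

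For each $v\in V(G)$ with $c=f|_G(v)$, K\"onig-edge-color $H_v$ using the palette $[\Delta(G)+3]$. Choose disjoint $S_1,S_2\subseteq\{\Delta(G)+4,\dots,\Delta(G)+n_H+3\}$ with $|S_i|=|V_i|$ (so $S_1\cup S_2$ is the whole high palette), color the vertices of $V_i$ by any bijection $V_i\to S_i$, and assign the fan edges by two independent derangements $\phi|_{V_i}\colon V_i\to S_i$, i.e.\ bijections satisfying $\phi(u)\neq f(u)$. Properness of the resulting total coloring is immediate from the disjointness of the low palette (used on edges of $H_v$ and of $G$) from the high palette (used on vertices of $H_v$ and on fan edges).

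For the avd check: adjacent $a,b\in V(G)$ have fans that each contribute exactly $S_1\cup S_2$, so $C^{G\circ H}(a)\cap[\Delta(G)+3]=C^G(a)\neq C^G(b)=C^{G\circ H}(b)\cap[\Delta(G)+3]$. A vertex of $V(G)$ and any vertex of some $V(H_v)$ have degrees differing by at least one in $G\circ H$ (using $n_H\geq\Delta(H)+1$), so their color-set sizes differ. For adjacent $u\in V_1$ and $w\in V_2$ inside some $H_v$, both $f(u)$ and $\phi(u)$ lie in $S_1$ while $C^{G\circ H}(w)\subseteq[\Delta(G)+3]\cup S_2$ is disjoint from $S_1$, giving $f(u)\in C^{G\circ H}(u)\setminus C^{G\circ H}(w)$.

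The main obstacle is the degenerate case $|V_1|=1$ (symmetrically $|V_2|=1$), in which no derangement of $S_1$ exists. Connectedness of $H$ then forces $H\cong K_{1,\Delta(G)+3}$, a star whose center and leaves have distinct degrees in $G\circ H$, so the avd condition between them is automatic. For this star case we use instead a single (non-split) derangement on the full high palette of size $n_H=\Delta(G)+4\geq 5$; the only remaining avd obligation, between adjacent vertices of $V(G)$, then follows from the same argument as above since each fan again contributes exactly the full high palette to the color set.
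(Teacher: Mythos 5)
Your proof is correct, and it shares the same skeleton as the paper's: extend an avd total $(\Delta(G)+3)$-coloring of $G$ by K\"onig-edge-coloring each copy of $H$ with the low palette $[\Delta(G)+3]$ (including $c=f(v)$, which is harmless since no edge of the copy is incident to $v$), and reserve the $n_H$ high colors $\{\Delta(G)+4,\dots,\Delta(G)+n_H+3\}$ for the vertices of $H$ and the fan $F_v$. Where you genuinely diverge is in how that high palette is deployed. The paper colors $V_1$ and $V_2$ monochromatically with $\Delta(G)+4$ and $\Delta(G)+5$, which forces a somewhat delicate patch: two special vertices $x_1,x_2$ receive the ``crossed'' fan colors $\Delta(G)+5$ and $\Delta(G)+4$, and if their color sets still coincide, $x_2$ is recolored to $\Delta(G)+6$ and the fan adjusted. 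Your rainbow assignment (an injection of $V(H)$ into the high palette split along the bipartition into blocks $S_1,S_2$, with fan colors given by derangements within each block) makes the avd condition inside each copy of $H$ immediate --- each vertex's own color lies in its block $S_i$ and is absent from every neighbour's color set --- and eliminates the collision case entirely; properness is also transparent because low and high colors never meet on incident elements. The price is the degenerate case $|V_1|=1$, where no derangement of a singleton exists; you correctly note that connectivity then forces $H\cong K_{1,\Delta(G)+3}$, that the centre/leaf avd condition is free by degree, and that a single derangement of the full high palette suffices (it exists since $n_H=\Delta(G)+4\geq 5$). Both arguments use exactly $\Delta(G\circ H)+3$ colors and the same distinguishing argument for adjacent vertices of $G$ (each fan contributes the full high palette); yours trades the paper's ad hoc repair step for a slightly larger case split up front.
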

\begin{proof}
By Theorem \ref{avd-bip}, $\chi_a''(H) \leq  \Delta(H)+2$. We start from any $(\Delta(G)+3)$-adjacent-vertex-distinguishing total coloring of $G$. 
Let $v\in V(G)$ and $f|_G(v)=c$, where $c\in[\Delta(G)+3]$, or in other words $c\in [\Delta(H)+6]$. We consider the relevant copy of $H$. By K\"{o}nig's theorem, we color the edges in the relevant copy of $H$ with $\Delta(H)$ colors, $\Delta(H)= \Delta(G)+3$, among them the color $c$. We assign color $\Delta(G)+4$ to all vertices in $V_1$ while color $\Delta(G)+5$ is assigned to all vertices from $V_2$. Now, let us choose a vertex colored with $\Delta(G)+4$, let us name it $x_1$, $x_1\in V_1$, and a vertex colored with $\Delta(G)+5$, let us name it by $x_2$, $x_2\in V_2$. 
At first, try to choose non-adjacent vertices or vertices of different degrees. When it is not possible, choose any two vertices colored appropriately. Let $f(vx_1)=\Delta(G)+5$ and let $f(vx_2)=\Delta(G)+4$. If $C^{G\circ H}(x_1)=C^{G\circ H}(x_2)$, then recolor $x_2$ into $\Delta(G)+6$ and choose any vertex in the same partition, let us say $x_3$, $x_3\in V_2$, and color $vx_3$ with $\Delta(G)+6$, otherwise we assign $\Delta(G)+6$ to any uncolored edge in the $F_v$. 
Next, we complete the coloring of edges in the fan $F_v$ by coloring the remaining uncolored $n_H-3$ edges with new colors $\Delta(G)+7,\ldots, \Delta(G)+n_H+3$. 
Note that the partial coloring of $G\circ H$ is proper and adjacent vertex distinguishing from the point of view of vertices from the copy of $H$.

We repeat the same procedure for all vertices $v\in V(G)$ and the relevant copies of $H$. 

Since we completed $C^G_f(v)$, for each $v\in V(G)$, by the same set of colors $\{\Delta(G)+4, \ldots,\Delta(G)+n_H+3\}$, and taking into account the previous reasoning for color sets of any two adjacent vertices from $H$, the total coloring of the whole corona is adjacent vertex distinguishing. The proof is complete.
\end{proof}

Note that a similar idea applied to $H$ being bipartite graph, in particular complete bipartite graph, with $\Delta(H)=\Delta(G)+k$ for $k\geq 4$ will not work. We mean assigning only $\Delta(H)$ colors to edges of $H$ and assigning colors $\Delta(G)+4, \ldots, \Delta(G)+n_H+3$ to edges of a fan $F_v$. Since in such solutions all $\Delta(H)$ colors used to color edges of $H$ are present in color sets of all vertices in $H$, then we can use none of these colors to color edges in $F_v$ and thus such an approach would involve more than $\Delta(G\circ H)+3$ colors, for $k\geq 4$. Hence, we certainly need to color edges of $H$ with more colors than only $\Delta(H)$.

Now, let us consider more general graphs. We take an attempt of generalization of the method given in the proof of Theorem \ref{thm:diff2}. The basis of this method is such an avd total $(\Delta(H)+3)$-coloring $f|_H$ of $H$ that there is a set of non-adjacent vertices $U=\{u_1,\ldots,u_k\}$ for which the colors from $[\Delta(H)+3]\backslash [\Delta(G)+3]$ are missing colors for vertices in $U$. For $k=2$ such a coloring was easy to achieve  and it was guaranteed by Proposition \ref{prop:demand}. For a greater $k$ we need additional conditions for degrees of $u\in U$.

\begin{proposition}
Let $H$ be a connected simple graph with $\Delta(H)\geq k+1$ for which Conjecture \ref{conj:zhang} holds. Let $\alpha(H) \geq k$ and let $u_1,\ldots,u_k$ be any $k$ non-adjacent vertices in $H$ such that $\deg(u_1)\leq \Delta(H)$, $\deg(u_2)\leq \Delta(H)$, $\deg(u_i)\leq \Delta(H)-i+2$ for $i\in\{3,\ldots,k-2\}$.
Then for any color $c\in [\Delta(H)-k+4]$ there is an avd total $(\Delta(H)+3)$-coloring $f$ of $H$ such that all the following conditions hold:
\begin{enumerate}
    \item $\Delta(H)+1+i \in \overline{C^H_f(u_i)}$, $i\in[k]$,
    \item $f(u_i)\neq c$, $i\in[k]$.
\end{enumerate}\label{prop:demandk}
\end{proposition}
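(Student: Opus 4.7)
The plan is to generalize Proposition \ref{prop:demand} by a sequence of global color transpositions applied to an initial avd total $(\Delta(H)+3)$-coloring of $H$, which exists by the hypothesis that Conjecture \ref{conj:zhang} holds for $H$. Throughout, I rely on the invariant that exchanging two colors $a \leftrightarrow b$ everywhere in the total coloring permutes every color set $C^H_f(\cdot)$ by the same transposition, and therefore preserves properness and the avd property simultaneously.

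First I would ensure $f(u_i) \neq c$ for every $i \in [k]$. For each violating index, the missing set $\overline{C^H_f(u_i)}$ has size $\Delta(H)+2-\deg(u_i)$, which by the degree hypothesis is at least $2$ for $i\in\{1,2\}$ and at least $i$ for $i \geq 3$. Pick a color $a \in \overline{C^H_f(u_i)}$ outside the reserved block $\{\Delta(H)+2,\ldots,\Delta(H)+k+1\}$ and outside any color already moved in an earlier fix, and swap $c \leftrightarrow a$ globally. Since the $u_i$ are pairwise non-adjacent, these fixes can be applied sequentially without the swaps interfering on any shared edge.

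Second, I process $i = 1, 2, \ldots, k$ in order and enforce $\Delta(H)+1+i \in \overline{C^H_f(u_i)}$. If the condition is already met, do nothing; otherwise choose $a_i \in \overline{C^H_f(u_i)}$ with $a_i \notin \{c\} \cup \{\Delta(H)+1+j,\, a_j : j < i\}$ and perform the global swap $a_i \leftrightarrow \Delta(H)+1+i$. The exclusions guarantee that the work already done on $u_j$ for $j < i$ survives: the transposition fixes every $\Delta(H)+1+j$ with $j < i$ (they are not among $\{a_i, \Delta(H)+1+i\}$), so those targets remain missing at $u_j$; and it also fixes $c$, so the conditions $f(u_j) \neq c$ persist.

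The main obstacle is the bookkeeping at each step: the forbidden set for picking $a_i$ has at most $2(i-1)+1$ elements, while $|\overline{C^H_f(u_i)}|$ is $\Delta(H)+2-\deg(u_i)$, which is exactly $i$ in the tight case of the degree hypothesis. I would close the gap by noting that at the moment a swap is needed we have $\Delta(H)+1+i \in C^H_f(u_i)$ (otherwise no swap is performed), so $\Delta(H)+1+i$ does not consume one of the $i$ missing slots at $u_i$; similarly, some of the earlier $a_j$'s may already lie outside $\overline{C^H_f(u_i)}$ and drop out of the intersection that matters. Combining these observations with the placement of $c$ in the disjoint low block $[\Delta(H)-k+4]$ and with the degree bound $\deg(u_i) \leq \Delta(H)-i+2$, one shows that an admissible $a_i$ always remains, which completes the induction and yields the desired coloring.
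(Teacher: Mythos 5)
Your overall strategy --- start from an arbitrary avd total $(\Delta(H)+3)$-coloring of $H$ and install the required missing colors at $u_1,\ldots,u_k$ by global color exchanges --- is exactly the route the paper intends (the paper in fact only sketches this proof, noting that the degree hypotheses force $|\bigcup_{i\in[k]}\overline{C^H_f(u_i)}|\geq k$ and leaving the exchanges to the reader). However, your sequential greedy realization has a genuine gap, which you half-acknowledge in the last paragraph but do not close. When you process $u_i$, your forbidden set $\{c\}\cup\{\Delta(H)+1+j,\,a_j : j<i\}$ can swallow the whole of $\overline{C^H_f(u_i)}$. This already happens at $i=2$: the degree bound allows $\deg(u_2)=\Delta(H)$, so $|\overline{C^H_f(u_2)}|=2$, and nothing prevents the situation $\overline{C^H_f(u_2)}=\{c,\Delta(H)+2\}$ after the $u_1$-swap. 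Then $\Delta(H)+3\notin\overline{C^H_f(u_2)}$, so a swap is required, yet both candidates for $a_2$ are forbidden ($c$ because it must stay fixed, $\Delta(H)+2$ because moving it would destroy the condition just established at $u_1$). Neither of your two rescue observations applies here: $\Delta(H)+1+i$ was never counted among the missing slots, and the colors blocking you are the targets $\Delta(H)+1+j$, not the earlier $a_j$'s.

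The repair is to abandon the one-index-at-a-time transpositions and choose all the exchanged colors simultaneously. Any permutation $\pi$ of $[\Delta(H)+3]$ applied globally again preserves properness and the avd property, and it achieves condition 1 precisely when $\pi^{-1}(\Delta(H)+1+i)\in\overline{C^H_f(u_i)}$ for each $i$; so what you need is a system of distinct representatives $b_i\in\overline{C^H_f(u_i)}$. Since $|\overline{C^H_f(u_i)}|=\Delta(H)+2-\deg(u_i)\geq i$ for every $i$ (reading the degree hypothesis as intended for all $i\geq 3$), every subfamily indexed by $S$ has union of size at least $\max S\geq |S|$, Hall's condition holds, and the representatives exist; in the example above one takes $b_2=\Delta(H)+2$ and sends $\Delta(H)+2\mapsto\Delta(H)+3$ while some other $b_1\in\overline{C^H_f(u_1)}$ is sent to $\Delta(H)+2$, which your transposition scheme cannot express. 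Condition 2 is then handled by the leftover freedom in $\pi$ (choosing $\pi^{-1}(c)$ outside $\{f(u_1),\ldots,f(u_k)\}$), and this too needs an explicit count rather than the appeal to the ``disjoint low block'' you make, since for larger $k$ the naive count of available preimages is not obviously positive.
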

\noindent We remain the full proof for the reader, but it is easy to see that the conditions for degrees of chosen vertices in an independent set of size at least $k$ guarantee us that $|\bigcup_{i\in[k]}\overline{C^H_f(u_i)}|\geq k$. So we are able to exchange colors in $H$ to achieve the desirable conditions. Of course, the conditions for degrees in Proposition \ref{prop:demandk} are not the only one guaranteeing us ''good'' avd total coloring of $H$.

\begin{theorem}
Let $G$ an $H$ be connected simple graphs on at least two vertices, for which Conjecture~\ref{conj:zhang} holds. Let  $k\geq 3$ be an integer, $\Delta(H)= \Delta(G)+k$, and $\alpha(H)\geq k$. If there exist  $k$ non-adjacent vertices in $H$ $u_1,\ldots,u_k$ such that $\deg(u_1)\leq \Delta(H)$, $\deg(u_2)\leq \Delta(H)$, $\deg(u_i)\leq \Delta(H)-i+2$ for $i\in\{3,\ldots,k-2\}$ then
 $$\chi_a''(G\circ H) \leq \Delta(G \circ H)+3.$$
\label{thm:diffk}
\end{theorem}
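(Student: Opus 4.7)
The plan is to mimic the construction in the proof of Theorem \ref{thm:diff2}, using Proposition \ref{prop:demandk} in place of Proposition \ref{prop:demand} so as to absorb the extra $k$ palette colors sitting in $[\Delta(H)+3]\setminus[\Delta(G)+3]$.

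Start from an avd total $(\Delta(G)+3)$-coloring $f|_G$ of $G$ and extend it copy by copy. For each $v\in V(G)$ with $f|_G(v)=c$, invoke Proposition \ref{prop:demandk} on the prescribed non-adjacent vertices $u_1,\ldots,u_k$ to produce an avd total $(\Delta(H)+3)$-coloring of the attached copy of $H$ in which, for every $i\in[k]$, the color $\Delta(G)+3+i$ is missing from $C^H_f(u_i)$ and $f(u_i)\neq c$. Color the fan edges $vu_i$ with $\Delta(G)+3+i$, consuming the top $k$ colors of the $H$-palette. If any vertices of $H$ still carry the color $c$, recolor all of them with a fresh color $\Delta(G)+k+4$; since no edge of $H$ uses this color, the restriction to $H$ remains a proper avd total coloring. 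Select $x\in V(H)\setminus\{u_1,\ldots,u_k\}$ that is not colored $\Delta(G)+k+4$---exactly the counting/independent-set argument used in the proof of Theorem \ref{thm:diff2} supplies such an $x$---and set $f(vx)=\Delta(G)+k+4$. Complete the fan $F_v$ by assigning the remaining $n_H-k-1$ edges pairwise distinct fresh colors from $\{\Delta(G)+k+5,\ldots,\Delta(G)+n_H+3\}$.

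Checking that the resulting $f$ is an avd total $(\Delta(G\circ H)+3)$-coloring follows the familiar three-case split. Two adjacent vertices of $G$ are distinguished because every fan contributes the same block $\{\Delta(G)+4,\ldots,\Delta(G)+n_H+3\}$ of colors, so the avd property of $f|_G$ is preserved. A vertex of $G$ and a vertex of any copy of $H$ have different degrees in $G\circ H$ and are distinguished automatically. For two adjacent vertices $a,b$ in the same copy of $H$, a short case analysis on whether $a,b$ lie in $\{u_1,\ldots,u_k,x\}$ mirrors Cases 2, 4, and 5 of the proof of Theorem \ref{thm:diff2}: either the fan adds two distinct colors in $\{\Delta(G)+k+5,\ldots,\Delta(G)+n_H+3\}$ which cannot appear in $C^H_f$, or one endpoint is some $u_i$ and gains the color $\Delta(G)+3+i$ that was absent from $C^H_f(u_i)$, or one endpoint is $x$ and gains the color $\Delta(G)+k+4$ that by construction is absent from $C^H_f(x)$; in each situation the enriched color sets must differ.

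The main obstacle is Proposition \ref{prop:demandk} itself---that is, producing a single avd total $(\Delta(H)+3)$-coloring of $H$ in which $k$ specified colors are simultaneously missing at a prescribed independent set while also keeping $f(u_i)\neq c$ for every $i$. That is why the statement carries both $\alpha(H)\geq k$ and the degree bounds on the $u_i$: together they guarantee that $\bigcup_{i\in[k]}\overline{C^H_f(u_i)}$ contains at least $k$ distinct slots, leaving enough room to swap the top $k$ colors globally on $H$ into the desired positions without spoiling the avd structure. Once that proposition is in hand, the remainder of the construction is a direct bookkeeping extension of the $k=2$ argument.
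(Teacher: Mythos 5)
Your proposal follows essentially the same route as the paper's proof: start from an avd total $(\Delta(G)+3)$-coloring of $G$, use Proposition \ref{prop:demandk} to color each copy of $H$ so that the colors $\Delta(G)+4,\ldots,\Delta(G)+k+3$ are missing at $u_1,\ldots,u_k$ and assign them to the fan edges $vu_i$, recolor any $c$-colored vertices of $H$ with $\Delta(G)+k+4$, route that color through a suitable vertex $x$, and finish the fan with fresh colors. The construction, the verification cases, and the reliance on Proposition \ref{prop:demandk} all match the paper's argument, so the proposal is correct and not materially different.
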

\begin{proof}
We start from any adjacent vertex distinguishing total $(\Delta(G)+3)$-coloring of $G$. We will refer to this part of the coloring as to $f|_{G}$. An extension of $f|_{G}$ into the whole $(\Delta(G\circ H)+3)$-coloring $f$ of $G\circ H$ is obtained as follows. Let $u_1,\ldots,u_k$ be any $k$ non-adjacent vertices fulfilling the assumption of the theorem. 

\begin{enumerate}
    \item Color vertices and edges of $H$ in avd total way with $\Delta(H)+3=\Delta(G)+k+3$ colors in such a way that color $\Delta(G)+3+i$ is missing in the color set of $u_i$, i.e. $\Delta(G)+3+i\in \overline{C^H(u_i)}$, $i\in[k]$, and none of vertices $u_i,\ldots,u_k$ is colored with $c$. This is possible due to Proposition \ref{prop:demandk}.
    
    It may happen that the partial total coloring of $G\circ H$ is improper at this stage. We will fix it later.
    \item Assign color $\Delta(G)+3+i$ to an edge $vu_i$, for every $i\in [k]$.
    \item If the partial total coloring of $G\circ H$ is improper then we recolor vertices colored initially with $c$ into $\Delta(G)+k+4$.
    
    \item Choose any vertex $x\in V\backslash\{u_1,\ldots,u_k\}$ not colored with $\Delta(G)+k+4$ and assign $\Delta(G)+k+4$ to $vx$. Note that such a vertex always exists. Since $\Delta(H)\geq k+1$ there must exist an edge whose endvertices are out of the set $\{u_1,\ldots,u_k\}$, then at least one of the endvertices of such an edge is not colored with $\Delta(G)+k+4$.

    \item Complete the coloring of edges in the fan $F_v$ by coloring the remaining uncolored $n_H-k-1$ edges with new colors $\Delta(G)+k+5,\ldots, \Delta(G)+n_H+3$. Note, that the only cases for which we really need to check the color sets are adjacent vertices of $x$ which are colored with $\Delta(G)+k+4$. But edges in the fan $F_v$ joining such vertices are colored with completely new colors not used any more in $F_v$ and in the copy of $H$ under consideration. So, even for such adjacent vertices, their color sets are different. Thus, the partial coloring of $G\circ H$ is proper and adjacent vertex distinguishing from the point of view of vertices from the copy of $H$.
\end{enumerate}
We repeat the same procedure for all copies of $H$. Finally, we get a total $(\Delta(G\circ H)+3)$-coloring of $G\circ H$.
Since we completed $C^G_f(v)$, for each $v\in V(G)$, by the same set of colors $\{\Delta(G)+4, \ldots,\Delta(G)+n_H+3\}$, the total coloring of the whole corona is adjacent vertex distinguishing. The proof is complete.
\end{proof}

\section{Conclusion}
In the paper we considered adjacent vertex distinguishing total coloring of corona graphs in the context of AVDTC Conjecture posted by Zhang in 2005. We confirmed this conjecture for:
\begin{itemize}
    \item generalized coronas $G\Tilde{\circ} \Lambda _{i=1}^n H_i$ with $\Delta(G)\geq \Delta(H_i)$, under the assumption that Conjecture \ref{conj:zhang} holds for $G$ and Conjecture \ref{tcc:conj} holds for every $H_i$, $i\in[n_G]$;
    \item all simple coronas $G \circ H$ with $\Delta(H)-\Delta(G)\leq 2$, under the assumption that Conjecture \ref{conj:zhang} holds for $G$ and $H$;
    
    Actually, the assumption for graph $H$ can be a little weaker. Our proofs show that it is enough that Conjecture \ref{tcc:conj} holds for $H$.
    
    \item all simple coronas $G\circ H$ with $\Delta(H)=\Delta(G)+3$, where $H$ is bipartite and Conjecture \ref{conj:zhang} holds for $G$;
    \item some simple coronas $G\circ H$ with $\Delta(H)= \Delta(G)+k$, $k\geq 3$ under some additional constraints - for details see Theorem \ref{thm:diffk}.
\end{itemize}

Taking into account the results known from the literature and taking the results from this work, we can replace our general graphs $G$ and $H$ with particular graph classes fulfilling Conjecture \ref{tcc:conj} and \ref{conj:zhang}. In Table \ref{HmnG} we present only exemplary results.

\begin{table}[htb]
\begin{center}
\begin{tabular}{|c|*{4}{c|}}\hline

\backslashbox[50mm]{$G$}{$H$} & path & cycle & 3-regular& 4-regular \\ \hline

path & \checkmark & \checkmark & \checkmark&\checkmark\\ \hline
cycle & \checkmark & \checkmark & \checkmark&\checkmark\\ \hline
3-regular & \checkmark& \checkmark& \checkmark &  \checkmark\\\hline
4-regular & \checkmark & \checkmark& \checkmark & \checkmark \\\hline
complete graph $K_n$, $n\geq 6$ & \checkmark & \checkmark & \checkmark& \checkmark\\ \hline
\end{tabular}
\caption{An exemplary graph classes of $G$ and $H$ such that Conjecture \ref{conj:zhang} holds for $G\circ H$.}
\end{center}
\label{HmnG}
\end{table}

One can ask what about the remaining coronas not covered by the results of this paper. We retain this for a further investigation and as an open problem for other graph theorists.

\newpage
\noindent{\bf Compliance with Ethical Standards:}

\noindent{\bf  Funding:} This study was funded by UNAM (Grant  PAPIIT-UNAM-IN117219) - Author B.

\noindent{\bf Ethical approval:} This article does not contain any studies with human participants or animals performed by any of the authors.

\noindent{\bf Statements and Declarations:} The manuscript has no associated data.

\bibliography{bibliography_avdtc}
\end{document}